\newcommand\tr{\mathop{\rm tr}\nolimits}
\newcommand\diff{\mathop{\rm Diff}\nolimits}
\newcommand\Lip{\mathop{\rm Lip}\nolimits}
\newcommand{\R}{\mathbb{R}}
\newcommand{\C}{\mathbb{C}}
\newcommand{\Y}{\mathcal{Y}}
\renewcommand{\L}{\mathcal{L}}
\newcommand{\A}{\textbf{A}}
\newcommand{\B}{\textbf{B}}
\renewcommand{\epsilon}{\varepsilon}
\newtheorem{thm}{Theorem}[section]
 \newtheorem{cor}[thm]{Corollary}
 \newtheorem{lem}[thm]{Lemma}
\newtheorem{prop}[thm]{Proposition}
 \theoremstyle{definition}
 \theoremstyle{remark}
\begin{document}
%
%
%
%
%
%
%
%
%
\title
 {Strong solutions of semilinear matched micro\-structure models}

\author{Joachim Escher and Daniela Treutler}



\date{}
\maketitle

\begin{abstract}
\textbf{Abstract.} The subject of this article is a matched microstructure model for Newtonian fluid flows in fractured porous media. This is a homogenized model which takes the form of two coupled parabolic differential equations with boundary conditions in a given (two-scale) domain in Euclidean space. The main objective is to establish the local well-posedness in the strong sense of the flow. 
Two main settings are investigated: semi-linear systems with linear boundary conditions and semi-linear systems with nonlinear boundary conditions. With the help of analytic semigoups we establish local well-posedness and investigate the long-time behaviour of the solutions in the first case: we establish global existence and show that solutions converge to zero at an exponential rate.\\[0.2cm]
\let\thefootnote\relax\footnotetext{
\textbf{Mathematics Subject Classification.} AMS 35K58, 35B40, 76N10}
\let\thefootnote\relax\footnotetext{\textbf{Keywords.} porous medium, double porosity, two-scale model, parabolic evolution equation, compressible Newtonian fluid}
\end{abstract}

\section{Introduction}
First ideas to use a two scale formulation to describe fluid flow in fractured porous media came up around 1960 e.g. by Barenblatt, Zheltov and Kochina \cite{BZK}. They reflect very well the exceptionell manner of the geometric conditions. The material possesses one natural porous structure and a second one is added by the dense system of cracks. A mathematical derivation of the model was later given in \cite{ADH}. In the sequel weak formulations of the problem have been studied intensively either by homogenization theory \cite{Hornung1, Hornung2} or with the help of monotone operators \cite{Antontsev,  CookShow, HorShow}. Stationary solutions and the elliptic problem are for example treated in \cite{ShoWa3}. Reaction terms and evolving pore geometry have been considered in several papers \cite{FrieKna, FrieTza, Meier1, MeiBo, Peter1, PeterBoehm}. Although the model in these studies has a similar form as in the present paper, the considered length scales are different. 

We consider the matched microstructure model (MM) as it was formulated by Showalter and Walkington in \cite{ShoWa}. Assume we are given a \textit{macroscopic} domain $\Omega\subset \R^n$, and for each $x\in \Omega$ a \textit{cell} domain $\Omega_x\subset \R^n$. These cell domains stand for the porous blocks while $\Omega$ contains in a homogenized sense the fissure system.

The model (MM) consists of two parts: The macro model for a function $u$, that represent the density of the fluid on the domain $\Omega$:
\begin{align*}
 \frac{\partial}{\partial t} u(t,x)-\Delta_x u(t,x) &= f(t,u)+ q(U)(t,x), && x\in \Omega, t\in (0,T],\\
u(t,x)&=0, && x\in \Gamma, t\in (0,T],\\
u(t=0)&=u_0.
\end{align*}
The micro model for the function $U$, that models the density in all blocks $\Omega_x$:
\begin{align*}
\frac{\partial}{\partial t} U(t,x,z)-\Delta_z U(t,x,z)&=0, &&x\in \Omega, z\in \Omega_x, t\in (0,T],\\
U(t,x,z)&=u(x), &&x\in \Omega, z\in  \Gamma_x, t\in (0,T],\\
U(t=0)&=U_0.
\end{align*}
The coupling between the macro and the micro scale is 
reflected by two terms. Firstly the boundary condition in the cells $\Omega_x$,
\begin{align}
 U(x)=u(x) \qquad \text{on } \partial \Omega_x, \text{ for all } x\in \Omega\label{match}
\end{align}
models the matching of the densities on the material interface.
For this reason the model was introduced in \cite{ShoWa} as matched microstructure model. Secondly the term
\begin{equation}\label{qU}
 q(U)(t,x) = -\int_{\Gamma_x} \frac{\partial U(t,x,s)}{\partial \nu} \, ds = -\frac{\partial}{\partial t} \int_{\Omega_x} U(t,x,z) \, dz.  
\end{equation}
represents the amount of fluid that is exchanged between the two structures. It acts as a source or sink term in the
 macroscopic system. 

Our interpretation of this model is based on the derivation of the coupled equations for the case of uniform cells at each
 point $x$ in the considered domain $\Omega$. On this basis we will first present our restrictions on the geometry and the definition of suitable Banach spaces. Then we reformulate the problem as an abstract semilinear intial value problem on the product space $ L_p(\Omega)\times L_p(\Omega, L_p( \Omega_x))$. Therefore we introduce an operator $\A$ that includes the highest order derivatives and the first coupling condition.
 In Theorem \ref{allgemein} we prove that $-\A$
 generates an analytic semigroup which finally implies well-posedness of the matched microstructure problem. 

A further part of this work is the consideration of the long time behaviour of the solution. We show that for Dirichlet boundary condition the solution decays to zero at an exponential rate. In the Neumann case we prove mass conservation.
Finally we consider a special two dimensional geometry and include a nonlinear boundary condition on $\partial \Omega$. A detailed derivation of this is given in \cite{Doktorarbeit}.
 We prove well posedness for weak solutions which can be improved using the concept of Banach scales. The approach is funded on work of Escher \cite{Escher89} and Amann \cite{Amann88, Amann_Multi, Amann}. Our methods are quite flexible and provide the high regularity of strong solutions.

The outline of this paper is the following. First we present some basic lemmas for a uniform geometry. The largest section is then devoted to the variation of the cell domain and the transcription of the problem (MM) into an abstract initial value problem. There the mains result that $-\A$ generates an analytic semigroup is located. Further the spectrum of $\A$ is investigated for the case $p=2$. The last chapter contains the model which includes nonlinear boundary conditions. With the help of a retraction from the boundary we transform the equations into an abstract semilinear problem, which leads to well-posedness of the original system.

\section{Some Aspects for Uniform Cells}
Let $\Omega\subset \R^n$ be a bounded domain with smooth boundary $\Gamma:= \partial \Omega$ and let $(\Omega, \mathcal{A}, \mu)$ be a measure space. 
Let $X,Y$ be Banach spaces. If $A$ is a closed linear operator from $X$ to $Y$ we denote with $D(A) = (dom(A), \|\cdot\|_A)$
the domain of definition of $A$ equipped with the graph norm. Further we write $\hat{U}$ if we mean a representativ in $\mathcal{L}_p$ of a give function $U\in L_p$. With $[\cdot]$ we indicate the equivalence class again.
The shifted operators will always be denoted with bold letters.
\begin{lem}\label{boundedShift}
Assume that $(x\mapsto A(x)) \in C(\overline{\Omega}, \mathcal{L}(X,Y))$. Let
\begin{align*}
 dom(\A) &= L_p(\Omega, X),\\
\A U &= \left[A(x)\hat{U}(x)\right], && \text{for } U\in L_p(\Omega, X),\, \hat{U} \in U.
\end{align*}
Then $\A$ is a well defined, bounded linear operator from $L_p(\Omega, X)$ to $L_p(\Omega, Y)$.
If further $A(x)=A$ independent of $x$ and $A$ is a retraction, then $\A$ is a retraction as well. 
\end{lem}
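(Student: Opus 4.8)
The plan is to verify the three assertions of the first part---well-definedness, linearity, boundedness---directly at the level of representatives, and then to obtain the last claim by running the same construction for a coretraction of $A$.

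First I would settle the only non-formal point, namely strong measurability of $x\mapsto A(x)\hat U(x)$ for a representative $\hat U$ of $U\in L_p(\Omega,X)$. Strong measurability of $\hat U$ yields simple functions $s_n=\sum_{k}\chi_{E_k}\,x_k^{(n)}$ with $s_n(x)\to\hat U(x)$ for $\mu$-a.e. $x$. Each map $x\mapsto A(x)x_k^{(n)}$ is continuous on $\overline\Omega$, so $x\mapsto A(x)s_n(x)=\sum_k\chi_{E_k}\,A(x)x_k^{(n)}$ is strongly measurable; and since
\[
 \|A(x)s_n(x)-A(x)\hat U(x)\|_Y\le M\,\|s_n(x)-\hat U(x)\|_X,\qquad M:=\max_{x\in\overline\Omega}\|A(x)\|_{\mathcal{L}(X,Y)}<\infty
\]
(the maximum is finite because $\|A(\cdot)\|$ is continuous on the compact set $\overline\Omega$), we get $A(x)s_n(x)\to A(x)\hat U(x)$ a.e., hence $x\mapsto A(x)\hat U(x)$ is strongly measurable. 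If $\tilde U\in U$ is another representative then $\hat U=\tilde U$ $\mu$-a.e., so $A(x)\hat U(x)=A(x)\tilde U(x)$ $\mu$-a.e.\ and the class $[A(x)\hat U(x)]$ does not depend on the choice; this makes $\A$ well defined.

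Boundedness and the missing integrability then drop out of the same estimate: $\int_\Omega\|A(x)\hat U(x)\|_Y^p\,d\mu\le M^p\int_\Omega\|\hat U(x)\|_X^p\,d\mu=M^p\|U\|_{L_p(\Omega,X)}^p$ (with the obvious modification via the essential supremum if $p=\infty$), so $\A U\in L_p(\Omega,Y)$ and $\|\A\|\le M$. Linearity is immediate because $A(x)$ acts linearly on the fibre and the vector-space operations on $L_p$ are computed pointwise on representatives.

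For the final assertion, assume $A(x)\equiv A$ with $A\in\mathcal{L}(X,Y)$ a retraction, and fix a coretraction $A^c\in\mathcal{L}(Y,X)$, i.e.\ $AA^c=\mathrm{id}_Y$. Applying the first part to the constant map $x\mapsto A^c$ produces a bounded linear operator $\A^c\in\mathcal{L}(L_p(\Omega,Y),L_p(\Omega,X))$ with $\A^c V=[A^c\hat V(x)]$, and computing on representatives gives $\A\A^c V=[AA^c\hat V(x)]=[\hat V(x)]=V$, so $\A\A^c=\mathrm{id}_{L_p(\Omega,Y)}$ and $\A$ is a retraction with coretraction $\A^c$. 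I expect the strong measurability of $x\mapsto A(x)\hat U(x)$ to be the only step requiring care; everything else is a routine transport of pointwise bounds through the $L_p$-norm.
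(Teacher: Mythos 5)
Your proof is correct and follows essentially the same route as the paper: same boundedness estimate with $M=\max_{x\in\overline\Omega}\|A(x)\|_{\mathcal{L}(X,Y)}$, same construction of the lifted coretraction $\A^c$ (the paper writes $\textbf{R}$) for the retraction claim. The only difference is that the paper dispatches well-definedness with the one-liner ``the continuity of $x\mapsto A(x)$ assures that $\A$ is well defined,'' whereas you spell out the strong-measurability argument via approximation by simple functions, which is the right justification and the one detail the paper leaves implicit.
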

\begin{proof}
The continuity of $(x\mapsto A(x))$ assures that $\A$ is well defined.  
Further we easily get $\A U\in L_p(\Omega, Y)$ and $\A$ is bounded because
\begin{align*}
 \|\A U\|^p_{L_p(\Omega,X)} =\int_\Omega \|A(x) \hat{U}(x)\|^p_Y \, d\mu(x) \leq \max_{x\in \overline{\Omega}} \|A(x)\|^p_{\L(X,Y)} \cdot\|U\|^p_{L_p(\Omega, X)}.
\end{align*}
Now assume that  $A(x)=A$ is a retraction and let $R \in \L(Y,X)$ be a continuous right inverse of $A$, so that $ A \circ R = id_Y.$
Let $V\in L_p(\Omega,Y)$, $\hat{V}\in \L_p(\Omega,Y)$ a representative of $V$. As before we define 
$$\textbf{R}\in \L(L_p(\Omega, Y),L_p(\Omega, X)),\qquad \textbf{R}V = [R\hat{V}(x)].$$
Then a short calculation shows that this is a continuous right inverse for $\A$.
\end{proof}
\begin{lem}\label{Lemma2}
For $x\in \Omega$, let $A(x) \in \mathcal{A}(X,Y)$ be a closed linear operator. Assume that there is $A_0\in \mathcal{A}(X,Y)$, such that $dom(A(x)) = dom(A_0),\text{ for all }x\in \Omega.$
Furthermore let $(x\mapsto A(x)) \in C(\overline{\Omega}, \L(D(A_0),Y))$. Then the operator
\begin{align*}
 dom(\A) &= L_p(\Omega, dom(A_0)),\\
\A U &= \left[A(x)\hat{U}(x)\right], && \text{for } U\in dom(\A),\, \hat{U} \in U.
\end{align*}
is a well defined, closed linear operator from $L_p(\Omega, X)$ to $L_p(\Omega, Y)$. If further $dom(A_0)$ is dense in $X$,
 then $\A$ is densely defined.
\end{lem}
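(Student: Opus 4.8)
The plan is to verify, in order, that $\A$ is well defined and linear, that it is closed, and — under the additional density hypothesis — that it is densely defined. Well-definedness and linearity are essentially a special case of Lemma~\ref{boundedShift}: by assumption $A(x)\in\L(D(A_0),Y)$ for every $x$ and $x\mapsto A(x)$ is continuous into $\L(D(A_0),Y)$, so applying Lemma~\ref{boundedShift} with the Banach space $D(A_0)$ in the role of $X$ shows that $U\mapsto[A(x)\hat U(x)]$ is a well-defined \emph{bounded} linear operator from $L_p(\Omega,D(A_0))=dom(\A)$ into $L_p(\Omega,Y)$. In particular $\A U$ is independent of the representative, $\A$ is linear, and $\A U\in L_p(\Omega,Y)$ for $U\in dom(\A)$.

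The decisive preliminary, and the step I expect to cost the most work, is to show that the graph norms are \emph{uniformly} comparable: there is a constant $c\ge1$ with $c^{-1}\|v\|_{A_0}\le\|v\|_{A(x)}\le c\,\|v\|_{A_0}$ for all $v\in dom(A_0)$ and $x\in\overline{\Omega}$. For each fixed $x$ the two norms are equivalent by the closed graph theorem, since $A(x)$ is closed and $dom(A(x))=dom(A_0)$. The upper estimate is uniform for free, because $\|v\|_{A(x)}=\|v\|_X+\|A(x)v\|_Y\le\bigl(1+\max_{x\in\overline{\Omega}}\|A(x)\|_{\L(D(A_0),Y)}\bigr)\|v\|_{A_0}$ and the maximum is finite by continuity of $x\mapsto A(x)$ on the compact set $\overline{\Omega}$. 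For the lower estimate I would argue by contradiction: were it to fail, there would be $x_n\in\Omega$ and $v_n\in dom(A_0)$ with $\|v_n\|_{A_0}=1$ and $\|v_n\|_{A(x_n)}\to0$, hence $v_n\to0$ in $X$ and $A(x_n)v_n\to0$ in $Y$; choosing a subsequence with $x_n\to x_*\in\overline{\Omega}$ and noting $\|A(x_*)v_n-A(x_n)v_n\|_Y\le\|A(x_*)-A(x_n)\|_{\L(D(A_0),Y)}\to0$ gives $A(x_*)v_n\to0$ in $Y$, so the single-operator equivalence at $x_*$ forces $\|v_n\|_{A_0}\to0$, contradicting $\|v_n\|_{A_0}=1$. (It is precisely the continuity of $x\mapsto A(x)$ up to the boundary that powers this compactness argument.)

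Granting the uniform equivalence, closedness is routine. Suppose $U_n\in dom(\A)$, $U_n\to U$ in $L_p(\Omega,X)$ and $\A U_n\to V$ in $L_p(\Omega,Y)$. After passing to a subsequence, $\hat U_n(x)\to\hat U(x)$ in $X$ and $A(x)\hat U_n(x)\to\hat V(x)$ in $Y$ for $\mu$-a.e.\ $x$; since $A(x)$ is closed, this yields $\hat U(x)\in dom(A(x))=dom(A_0)$ and $A(x)\hat U(x)=\hat V(x)$ for a.e.\ $x$. Then $\|\hat U_n(x)-\hat U(x)\|_{A(x)}\to0$ a.e., so by the uniform equivalence $\|\hat U_n(x)-\hat U(x)\|_{A_0}\to0$ a.e.; hence $\hat U$ is strongly measurable as a $D(A_0)$-valued function (being an a.e.\ limit of such), and $\|\hat U(x)\|_{A_0}\le c\,\|\hat U(x)\|_{A(x)}=c\bigl(\|\hat U(x)\|_X+\|\hat V(x)\|_Y\bigr)$ belongs to $L_p(\Omega)$. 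Therefore $U\in L_p(\Omega,D(A_0))=dom(\A)$ and $\A U=[A(x)\hat U(x)]=V$, which is closedness.

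Finally, if $dom(A_0)$ is dense in $X$, I would establish density of $dom(\A)$ in $L_p(\Omega,X)$ in the standard way: approximate an arbitrary $U\in L_p(\Omega,X)$ by an $X$-valued simple function supported on finitely many sets of finite measure, and then replace each of its finitely many values by an element of $dom(A_0)$ close to it in $X$; the resulting simple function lies in $L_p(\Omega,D(A_0))=dom(\A)$ and is arbitrarily $L_p(\Omega,X)$-close to $U$. Hence $\A$ is densely defined.
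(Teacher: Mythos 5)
Your proof is correct and follows the same overall route the paper sketches: Lemma~\ref{boundedShift} for well-definedness, a direct verification of closedness, and a standard density argument (the paper cites Amann's Theorem~4.3.6 for this last step, which amounts to the same simple-function approximation). The one substantive detail you supply, which the paper compresses into ``a short calculation,'' is the \emph{uniform} (in $x$) equivalence of the graph norms $\|\cdot\|_{A(x)}$ and $\|\cdot\|_{A_0}$; this uniformity is genuinely needed, since pointwise equivalence alone would not let you pass from a.e.\ convergence to the conclusion $\hat U\in L_p(\Omega,D(A_0))$, and your compactness argument for it is the right one. One small caveat worth flagging: your argument produces a limit point $x_*\in\overline{\Omega}$ and then invokes closed-graph equivalence of $\|\cdot\|_{A(x_*)}$ and $\|\cdot\|_{A_0}$ \emph{at} $x_*$, while the hypothesis only asserts that $A(x)$ is a closed operator with domain $dom(A_0)$ for $x\in\Omega$; if $x_*\in\partial\Omega$ you should additionally justify (or explicitly assume) that $A(x_*)$, defined by continuity in $\L(D(A_0),Y)$, is still closed with the same domain. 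This is an imprecision already present in the lemma's formulation rather than a defect specific to your argument, and it is harmless in the paper's applications, where all data extend continuously to $\overline{\Omega}$.
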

\begin{proof}
The first statement follows from Lemma \ref{boundedShift} and a short calculation. The density assertion follows from the proof of Theorem 4.3.6 in \cite{Amann}.
\end{proof}
Now we turn to sectorial operators.  
Let $\omega \in \R, \theta \in (0, \pi)$. We set
$$S_{\theta,\omega} = \{\lambda \in \C; \lambda \neq \omega, |arg(\lambda-\omega)| < \theta\}.$$ 
\begin{lem}\label{Lemma3}
 Let the assumptions of the previous Lemma be fulfilled with $X=Y$. Assume further that there exist constants
 $\omega \in \R, \theta \in (0, \pi)$, $M\geq 1$ such that for every $x\in \Omega$, $S_{\theta,\omega} \subset \rho(-A(x))$ and 
\begin{align*}
 \|(\lambda +A(x))^{-1}\|_{\L(X)} &\leq \frac{M}{|\lambda-\omega|} && \text{for } \lambda \in S_{\theta,\omega}.  
\end{align*}
Then $\A$ is sectorial in $L_p(\Omega, X)$.
\end{lem}
\begin{proof}
 Let $\lambda \in S_{\theta,\omega}$. With Lemma \ref{boundedShift} we define $\textbf{R}_\lambda\in \L(L_p(\Omega, X))$ by
$$\textbf{R}_\lambda U := [(\lambda + A(x))^{-1} \hat{U}(x)]$$
for $U\in L_p(\Omega, X)$. Then one easily calculates that this is the inverse of $\lambda+ \A$. Thus $\lambda \in \rho(-\A)$. Furthermore it holds
\begin{align*}
 \|(\lambda+\A)^{-1}U\|_{L_p(\Omega,X)}^p&\leq \int_\Omega \|(\lambda + A(x))^{-1} \|^p_{\L(X)} \|U(x)\|_X^p \, d\mu(x)\\
&\leq \left(\frac{M}{|\lambda-\omega|}\right)^p \int_\Omega \|U(x)\|_X^p \, d\mu(x). 
\end{align*}
So we conclude that $\A$ is sectorial.
\end{proof}
\section{The Semilinear Problem}
\subsection{Geometry}\label{1.1}
 The main idea of this part is to relate the cell's shape to one standard cell, the unit ball $B= B(0,1) \subset \R^n$. Let $S= \partial B$ be its boundary. 
We assume that there are two mappings $\Psi, \Phi$ with
\begin{align*}      
 \Psi : \Omega \times B &\to \mathbb{R}^n,\\
 \Phi: \Omega\times B &\to \mathbb{R}^n\times\mathbb{R}^n,\\
  (x,y) &\mapsto (x, \Psi(x,y)).
\end{align*}
Now a cell at a point $x\in \Omega$ is the image of $B$ at $x$, i.e. $\Omega_x:= \Psi(x,B).$ We set
$$Q:= \bigcup_{x\in \Omega}  \{x\} \times \Omega_x.$$
Then $ Q= \Phi(\Omega\times B)$.
To assure that $\Omega_x$ is a bounded smooth domain as well impose some properties of $\Phi, \Psi$:
\begin{align}
 \Phi&\in \Lip(\Omega\times B, Q),\label{cond1}\\ 
 \Phi^{-1} &\in \Lip( Q, \Omega\times B), \label{cond2}\\
 \Phi(x, \cdot) &\in \diff(\overline{B}, \overline{\Omega}_x), && \textrm{for all }x\in \Omega,\label{cond3}\\ 
 &\sup_{x\in \Omega, |\alpha|\leq 2}   \left\{\|\partial^\alpha_y \Phi(x)\|_p, \|\partial^\alpha_z \Phi^{-1}(x)\|_p\right\} < \infty.\label{cond4}
\end{align}
Here $\|\cdot\|_p$ denotes the usual $L_p$-norm. The set $\diff(\overline{B}, \overline{\Omega}_x)$ shall denote all $C^\infty$-diffeomorphism from $\overline{B}$ to $\overline{\Omega}_x$ such that the restriction to the boundary $S$ gives a diffeomorphism to $\Gamma_x$. It follows from the assumptions that $Q$ is measurable. Further for every $x\in \Omega$, the set $\Omega_x$ is a bounded domain with smooth boundary $\Gamma_x:= \partial \Omega_x$. Note that the special construction of $\Phi$ 
implies that it is injectiv.  Thus we will be able to work with the trace operator on $B$ and transfer it to $\Omega_x$. The conditions ensure that the following maps are well defined isomorphisms. Given $2\leq p<\infty$, we define pull back 
and push forward operators
\begin{align*}
 \Phi_* &: L_p(\Omega\times B)\to L_p(Q):
U  \mapsto U \circ \Phi^{-1},\\
\Phi^* &: L_p(Q)\to L_p(\Omega \times B):
 V \mapsto V\circ \Phi. 
\end{align*}
The following definition of a function space is based on Bochner's integration theory. 
In \cite{Adams} (3.34, 3.35), it is proven for Sobolev-Slobodetski spaces that under these diffeomorphisms $W_p^s(\overline{B})$ is mapped onto $W_p^s(\overline{\Omega}_x)$ for $0\leq s\leq 2$. For $s=0$ we identify $W_p^0(\overline{B}) =L_p(B)$.
The space  $L_p(\Omega, W_p^s(B))$ is now defined by means of the Bochner integration theory.
 We define
\begin{align}
 L_p(\Omega, W_p^s(\Omega_x)) := \Phi_* (L_p(\Omega, W^s_p(B))).\label{spaces}
\end{align}
We can prove that equipped with the induced norm 
\begin{align*}
 \|f\|_{x,s}&:= \|\Phi^* f\|_{L_p(\Omega, W_p^s(B))}, &&f\in L_p(\Omega, W_p^s(\Omega_x)),
\end{align*}
this is a Banach space.\footnote{Note that hypothesis \eqref{cond4} ensures that different $\Phi$'s within the class \eqref{cond1} to \eqref{cond4} lead to equivalent norms.}
For the formulation of the boundary conditions on the cells we need suitable trace operators. 
Our assumptions ensure that we can restrict $\Phi^*, \Phi_*$ to $L_p(\Omega\times S)$.
We use the same notation for the pullback and push forward as on $\Omega\times B$. Let $s\geq 0$. We define
\begin{align*}
L_p(\Omega, W_p^s(\Gamma_x))&:= \Phi_* \left(L_p(\Omega, W_p^s(S))\right)\\
\|U\|_{L_p(\Omega, W^s_p(\Gamma_x))} &= \|\Phi^* U\| _{L_p(\Omega, W^s_p(S))}, && U\in L_p(\Omega, W^s_p(\Gamma_x)).
\end{align*}
As before this is a Banach space. From Lemma \ref{boundedShift} we deduce that the shifted trace
\begin{align*}
 \tr_S: L_p(\Omega, W_p^1(B)) \to L_p(\Omega, W_p^{1-\frac1p} (S)): \tr_S U = [tr_S\hat{U}],
\end{align*}
is a well defined linear operator. The last trace in the brackets is the usual trace on $B$. Next we transport this operator to $Q$. We set
\begin{align*}
 \tr &:L_p(\Omega, W_p^1(\Omega_x)) \to L_p(\Omega, W_p^{1-\frac1p}(\Gamma_x)), \,
\tr:= \Phi_* \tr_S \Phi^*.
\end{align*}
The continuity of $\Phi_*, \Phi^*, \tr_S$ ensures that $\tr$ is a continuous operator. 
In particular $\tr U=0$ implies $\tr_S ( \Phi^* U)=0$. 
From Lemma \ref{boundedShift} we conclude that $\tr_S$ is a retraction. There exists a continuous right inverse $R_S$ of $\tr_S$  that maps constant functions on the boundary to constant functions on $B$. We define
\begin{align*}
R&:= \Phi_* R_S \Phi^*. 
\end{align*}
 Then this is a continuous right inverse to $\tr$. Let $u\in L_p(\Omega)$. We identify 
$$R u = R(u \cdot 1_S)\in L_p(\Omega, W_p^2(B)).$$
With $\Delta_z$ we denote the Laplace operator in the coordinates $z\in \Omega_x$. Similarly we write $\Delta_y$ and $\Delta_x$ for the Laplace acting on functions over $B$ or $\Omega$. The definitions above ensure that 
\begin{align}
 \Delta_z Ru(x) =0, \qquad \text{for a.e. } x\in  \Omega.\label{null}
\end{align}
This will be helpful in later calculations. Another definition of function spaces for the matched microstructure problem can be found in \cite{MeiBo}.
\subsection{Operators} \label{TransOp}
To use existing results for strongly elliptic operators, we first consider some auxiliary operators. Let $A_1$ be the Dirichlet-Laplace operator on $\Omega$,
\begin{align*}
 dom(A_1)&= W_p^2(\Omega)\cap W_p^{1,0}(\Omega),\\
A_1 u&=-\Delta_x u, && \text{for } u\in dom(A_1).
\end{align*}
It is well known that $A_1$ is sectorial. 
For each $x\in \Omega$ we define a Riemannian metric $g(x)$ on the unit ball $B$. We write
\begin{align*}
 g_{ij}(x) &:=(\partial_{z_i} \Phi(x)|\partial_{z_j} \Phi(x)),\\
 \sqrt{|g(x) |}&:= \sqrt{\det{g_{ij}(x)}},\\
 g^{ij}(x) &:= (g_{ij}(x))^{-1}.
\end{align*}
Then the regularity assumptions on $\Phi$ imply that this metric is well defined and there exists constants $C_i>0, i=1,2$ such that $C_1 |x|^2 \leq \sum_{i,j} g^{ij} x_i x_j \leq C_2   |x|^2$.
Let $U\in L_p(\Omega, W_p^1(\Omega_x))$, $V= \Phi^* U$. We set
\begin{align}
 q(U)(x) := -\int_S \sqrt{|g(x)|}g^{ij}(x) \partial_{y_i} \hat{V}(x) \cdot  \nu_j \, ds.
\end{align}
Here $\nu=(\nu_1, \dots , \nu_n)$ denotes the outer normal vector on $B$. Then $q(U)$ is a function in $ L_p(\Omega)$. Using the transformation rule for integrals one sees that this definition is consistent with  \eqref{qU}. 
We define the operator $\A_2$ using the transformed setting
\begin{align}
 dom(\A_2) &= \{U \in L_p(\Omega, W_p^2(\Omega_x)); \tr U=0\},\\
 \A_2U &= \Phi_* [\mathcal{A}_x \hat{V}(x)].
\end{align}
The brackets $[\cdot]$ again indicate taking the equivalence class and $\hat{V}$ is a representative of $V$. Given $x\in \Omega$, the operator $\mathcal{A}_x$ acts in the following way on $v\in W_p^2(B)$,
$$\mathcal{A}_x v = -\frac{1}{\sqrt{|g(x)|}} \sum_{i,j}\partial_{y_i}\left(\sqrt{|g(x)|} g^{ij}(x)  \partial_{y_j}\right) v.$$
Note that $\mathcal{A}_x$ is the Laplace-Beltrami-operator with respect to the Riemannian metric $g$. It holds
\begin{lem}\label{Lemma6}
 The operator $\A_2$ is well defined.
\end{lem}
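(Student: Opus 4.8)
The plan is to realise $\A_2$ as the composition $\Phi_*\circ\mathbf{L}\circ\Phi^*$, where $\mathbf{L}$ is the shifted operator attached by Lemma~\ref{boundedShift} to the family $(\mathcal{A}_x)_{x\in\Omega}$ on the model ball $B$, and then to read off well-definedness from the isomorphism properties of $\Phi^*$ and $\Phi_*$ collected in Section~\ref{1.1}. For $U\in dom(\A_2)$ the pull-back $V:=\Phi^*U$ lies in $L_p(\Omega,W_p^2(B))$ because $\Phi^*$ maps $L_p(\Omega,W_p^2(\Omega_x))$ isomorphically onto $L_p(\Omega,W_p^2(B))$; the side condition $\tr U=0$ concerns only the domain and plays no role for well-definedness of the assignment, so it may be ignored here.

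The heart of the matter is to show that each $\mathcal{A}_x$ is a bounded operator from $W_p^2(B)$ to $L_p(B)$ and that $x\mapsto\mathcal{A}_x$ belongs to $C(\overline\Omega,\L(W_p^2(B),L_p(B)))$. Rewriting the divergence form in non-divergence form, $\mathcal{A}_x v=-g^{ij}(x)\,\partial_{y_i}\partial_{y_j}v-\tfrac{1}{\sqrt{|g(x)|}}\,\partial_{y_i}\bigl(\sqrt{|g(x)|}\,g^{ij}(x)\bigr)\,\partial_{y_j}v$, reduces this to estimating the coefficients $g^{ij}(x)$, $\sqrt{|g(x)|}$ and their first $y$-derivatives. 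Since $g_{ij}(x)$ is built from first-order $y$-derivatives of $\Phi$, conditions \eqref{cond1}--\eqref{cond4} bound $g_{ij}(x)$ and $\sqrt{|g(x)|}$ in $L_\infty(B)$ and $\partial_{y_k}g_{ij}(x)$, $\partial_{y_k}\sqrt{|g(x)|}$ in $L_p(B)$, uniformly in $x$; the two-sided ellipticity estimate $C_1|\xi|^2\le\sum_{i,j}g^{ij}(x)\xi_i\xi_j\le C_2|\xi|^2$ together with the identity $\partial g^{ij}=-g^{ik}(\partial g_{k\ell})g^{\ell j}$ transfers these bounds to $g^{ij}(x)$ and $\partial_{y_k}g^{ij}(x)$. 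Combining this with the Sobolev embedding of $W_p^2(B)$ — so that $\partial_{y_j}v$ can be paired in H\"older's inequality with the merely $L_p$-regular lower-order coefficients — gives $\|\mathcal{A}_x\|_{\L(W_p^2(B),L_p(B))}\le c$ uniformly in $x$, while the norm continuity of $x\mapsto\mathcal{A}_x$ follows from the Lipschitz continuous dependence of $\Phi(x,\cdot)$, hence of all the coefficients, on $x$. I expect this verification — in particular the handling of the first-order coefficients, which by \eqref{cond4} are only controlled in $L_p(B)$ — to be the only genuinely delicate step.

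Granting it, Lemma~\ref{boundedShift} applied with $X=W_p^2(B)$, $Y=L_p(B)$ and $A(x)=\mathcal{A}_x$ shows that $\mathbf{L}V:=[\mathcal{A}_x\hat V(x)]$ is a well-defined bounded linear operator from $L_p(\Omega,W_p^2(B))$ to $L_p(\Omega,L_p(B))$; in particular $[\mathcal{A}_x\hat V(x)]$ is an element of $L_p(\Omega,L_p(B))=L_p(\Omega\times B)$ that does not depend on the chosen representative $\hat V$. Since $\Phi_*$ maps $L_p(\Omega\times B)$ isomorphically onto $L_p(Q)=L_p(\Omega,L_p(\Omega_x))$, the assignment $\A_2U=\Phi_*\mathbf{L}\Phi^*U$ produces, for every $U\in dom(\A_2)$, a well-defined element of $L_p(\Omega,L_p(\Omega_x))$, and $U\mapsto\A_2U$ is a well-defined linear map. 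As a consistency check one may use the transformation rule for integrals to see that $\Phi_*[\mathcal{A}_x\hat V(x)]$ is precisely the weak $z$-Laplacian $-\Delta_zU$ on $Q$, so that the formula matches the original micro equation and is compatible with \eqref{qU} and \eqref{null}; the independence of the particular representative $\Phi$ within the class \eqref{cond1}--\eqref{cond4} is exactly the content of the footnote in Section~\ref{1.1}.
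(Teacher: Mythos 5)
Your proposal follows essentially the same route as the paper: transfer to the model ball $B$ via $\Phi^*,\Phi_*$ and invoke a shift lemma for the family $(\mathcal{A}_x)_{x\in\overline\Omega}$. The paper cites Lemma~\ref{Lemma2} (closed-operator version) rather than Lemma~\ref{boundedShift}, but for the mere statement of well-definedness these amount to the same thing, since Lemma~\ref{Lemma2} reduces to Lemma~\ref{boundedShift} applied with $X=D(A_0)=W_p^2(B)\cap W_p^{1,0}(B)$; the paper's choice is dictated by the downstream need for $\A_2$ to be closed, which you correctly note is not required here.

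One caution on the step you yourself flag as delicate. With $v\in W_p^2(B)$ you have $\partial_{y_j}v\in W_p^1(B)$, and to land the product with the first-order coefficient in $L_p(B)$ via H\"older you would need $\partial_{y_j}v\in L_\infty(B)$ (equivalently $p>n$), or else coefficients controlled in $L_s$ for some $s\geq n$. A literal reading of \eqref{cond4} gives only $L_p$ control, so the argument as written does not close for $p\leq n$. This is not a defect in your architecture relative to the paper's — the paper asserts ``the coefficients of $\mathcal{A}_x$ depend continuously on $x$'' without elaboration and so carries the same latent issue — but the intended reading is clearly that $\Phi(x,\cdot)\in\diff(\overline B,\overline\Omega_x)$ together with \eqref{cond1}--\eqref{cond4} furnishes uniform $C^1$ (in particular $L_\infty$) bounds on $\sqrt{|g|}$, $g^{ij}$ and their first $y$-derivatives, under which your H\"older step is immediate and the leading term is handled as you describe. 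So: correct approach, same as the paper, but your H\"older pairing should be stated under the $L_\infty$ (not $L_p$) coefficient control that the hypotheses are implicitly meant to provide.
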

\begin{proof}
The coefficients of $\mathcal{A}_x$ depend continuously on $x$. Moreover the domain of definition is independent of $x\in \Omega$. Since $\Phi$ is defined up to the boundary of $\Omega$ the definition can be extended to its closure. So the hypothesis follows from Lemma \ref{Lemma2} and the properties of $\Phi_*$.
\end{proof}
The following lemma collects some properties of the defined operators. Let $R(\lambda, A) = (\lambda+A)^{-1}$ denote the resolvent operator of $-A$ for $\lambda\in \rho(-A)$.
\begin{lem}
 Assume that for any $x\in \Omega$, $\Phi_x:= \Phi(x,\cdot)$ is orientation preserving. Further assume that the Riemannian metric $g^{ij}$ induced from $\Phi$ is well defined. 
For each cell we define the 
 transformation $B_x$ of the Dirichlet-Laplace operator ,
\begin{align*}
dom(B_x) &= W_p^2(B)\cap W_p^{1,0}(B),\\
 B_x v &= \mathcal{A}_x v, && \textrm{ for } \ v\in dom(B_x).
\end{align*}
It holds
\renewcommand{\labelenumi}{(\alph{enumi})}
\begin{enumerate}
 \item The operators $B_x$ are strongly elliptic in $L_p(B)$.
\item Given $x\in \Omega$, the operator $B_x$ is sectorial. In addition there exists a sector
$$S_{\theta, \omega} = \{\lambda\in \C; \lambda\neq \omega, |arg (\lambda-\omega)| < \theta\},$$
and a constant $M_2>0$, both independent of $x$, such that
\begin{align}
 \rho(-B_x) & \supseteq S_{\theta, \omega},\\
 \| R(\lambda, B_x)\|_{\L(L_p(B))} &\leq \frac{M_2}{|\lambda-\omega|}, && \text{for all } \lambda \in S_{\theta,\omega}.\label{sect}
\end{align}
\item The operator $\B$ in $L_p(\Omega \times B)$, given by
\begin{align*}
 dom(\B) &= L_p(\Omega, W_p^2(B)\cap W_p^{1,0}(B) ),\\
 \B V &= [ B_x \hat{V}(x) ], && V\in dom(\B ),\, \hat{V} \in V,
\end{align*}
is well defined and sectorial.
 \item Set $\tilde{f} := \Phi^* f\in L_p(\Omega\times B)$. If the function $V\in L_p(\Omega, W_p^2(B)\cap W_p^{1,0}(B))$ 
is a solution of $\B V=\tilde{f}$, then $U:= \Phi_* V$ fulfills
$$ -\Delta_z V(x,\cdot) = f(x, \cdot), \ \ \text{ for a.e. }x\in \Omega.$$
Moreover
$$U(x,z) = 0, \ \ \text{ for a.e. }x\in \Omega, z\in \Gamma_x.$$
\end{enumerate}
\end{lem}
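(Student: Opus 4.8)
My plan is to handle the four assertions in turn: \emph{(a)} is a computation with the principal symbol, \emph{(b)} is the quantitative $L_p$-theory of elliptic operators with the emphasis on uniformity in $x$, \emph{(c)} is a direct application of the shifted-operator lemmas of Section~2, and \emph{(d)} is an unwinding of the definitions via the naturality of the Laplace--Beltrami operator. For \emph{(a)}, writing $B_x=\mathcal{A}_x$ in non-divergence form, its principal part has coefficient matrix $\bigl(g^{ij}(x)\bigr)_{i,j}$, which is real and symmetric; by \eqref{cond1}--\eqref{cond4} the entries are bounded in $y$ with a modulus of continuity uniform in $x$, and the lower-order coefficients are likewise controlled. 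The two-sided estimate $C_1|\xi|^2\le\sum_{i,j}g^{ij}(x)\xi_i\xi_j\le C_2|\xi|^2$ recorded above is exactly the strong (indeed normal) ellipticity condition, with ellipticity constants independent of $x$; together with the Dirichlet condition on the smooth bounded domain $B$ this places each $B_x$ into the standard class of strongly elliptic boundary value problems.

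For \emph{(b)}, sectoriality of a single $B_x$ in $L_p(B)$ under Dirichlet conditions is classical (see e.g.\ \cite{Amann, Amann88}). The delicate point --- and the step I expect to be the main obstacle --- is that the sector $S_{\theta,\omega}$ and the resolvent constant $M_2$ can be chosen independently of $x$. I would invoke the elliptic resolvent estimate in the quantitative form in which $\theta$, $\omega$ and $M_2$ depend on the operator only through the ellipticity constants $C_1,C_2$, the $L_\infty$-bounds and moduli of continuity of its coefficients, and the fixed domain $B$. By \eqref{cond4} all of these structural quantities are bounded uniformly over $x\in\Omega$, so a single triple $\theta,\omega,M_2$ serves every $x$ simultaneously, which is precisely \eqref{sect}.

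For \emph{(c)}, the operator $\B$ is the shifted operator associated with the family $(B_x)_{x\in\Omega}$ in the sense of Section~2: all $B_x$ share the $x$-independent domain $W_p^2(B)\cap W_p^{1,0}(B)$, and $x\mapsto B_x$ is continuous into $\L(W_p^2(B)\cap W_p^{1,0}(B),L_p(B))$ because the coefficients $\sqrt{|g(x)|}\,g^{ij}(x)$ depend continuously on $x$; as in Lemma~\ref{Lemma6} this extends to $\overline{\Omega}$ since $\Phi$ is defined up to the boundary. Well-definedness and closedness of $\B$ then follow from Lemma~\ref{Lemma2}, and density of $W_p^2(B)\cap W_p^{1,0}(B)$ in $L_p(B)$ makes $\B$ densely defined; finally the uniform resolvent bound from \emph{(b)} is exactly the hypothesis of Lemma~\ref{Lemma3} with $X=Y=L_p(B)$, and that lemma yields sectoriality of $\B$.

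For \emph{(d)}, from $\B V=\tilde f$ we obtain $B_x\hat{V}(x)=\widehat{\Phi^* f}(x)$ in $L_p(B)$ for a.e.\ $x\in\Omega$, i.e.\ $\mathcal{A}_x\hat{V}(x)=\hat{f}(x)\circ\Phi_x$ with $\Phi_x:=\Phi(x,\cdot)$. Since $\mathcal{A}_x$ is the Laplace--Beltrami operator of the metric $g(x)$, which is the pull-back under $\Phi_x$ of the Euclidean metric on $\Omega_x$, its naturality under diffeomorphisms gives $\mathcal{A}_x(w\circ\Phi_x)=-(\Delta_z w)\circ\Phi_x$ for $w\in W_p^2(\Omega_x)$. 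Applying this with $w=\hat{U}(x)$, where $U:=\Phi_* V$ so that $\hat{V}(x)=\hat{U}(x)\circ\Phi_x$ and the $W_p^2$-regularity transfers along $\Phi_x$ by the setup of Section~3.1, and then composing with $\Phi_x^{-1}$, we obtain $-\Delta_z U(x,\cdot)=f(x,\cdot)$ for a.e.\ $x$; the orientation-preserving hypothesis enters here to keep $\sqrt{|g(x)|}>0$ and the change-of-variables formula sign-correct. Finally $V\in dom(\B)$ forces $\hat{V}(x)\in W_p^{1,0}(B)$, i.e.\ $\tr_S\hat{V}(x)=0$ for a.e.\ $x$; since $\Phi_x$ restricts to a diffeomorphism $S\to\Gamma_x$ the trace transforms covariantly, so $U(x,\cdot)$ vanishes on $\Gamma_x$ for a.e.\ $x$, as claimed.
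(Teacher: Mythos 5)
Your proposal is correct and tracks the paper's own (much terser) proof: for (a) you read ellipticity off the symbol $g^{ij}$ rather than invoking invariance of strong ellipticity under coordinate change, for (b) you appeal to the quantitative elliptic resolvent estimate where the paper cites Lunardi Thm.\ 3.1.3 and Gilbarg--Trudinger Thm.\ 9.14, and for (c) you correctly route through Lemmas \ref{Lemma2} and \ref{Lemma3} where the paper merely says ``by definition and Lemma \ref{Lemma6}.'' Your part (d) also tacitly corrects the typo in the statement ($-\Delta_z U=f$, not $-\Delta_z V=f$), which is the intended reading.
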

\begin{proof}
The first part follows from the fact that strong ellipticity is preserved under transformation of coordinates. Part b) is a consequence of the definition of $\Phi$ and \cite{Luna}, Theorem 3.1.3. With the help of Theorem 9.14, \cite{GilbTru} we conclude
that the sector is independent of $x$. Now the rest follows by definition and Lemma \ref{Lemma6}.  
\end{proof}
A more detailed proof can be found in \cite{Doktorarbeit}.
Now we are ready to treat the coupled problem. Given $u\in L_p(\Omega)$, we set
$$D_0(u):= \left\{ U\in L_p\left(\Omega, W_p^2(\Omega_x)\right); \tr U=u\right\}.$$
This is a closed linear subspace of $L_p(\Omega, W_p^2(\Omega_x))$. So we can define the
 operator $\textbf{A}$ by
\begin{align*}
 dom(\textbf{A})&=\bigcup_{u\in 
W_p^2(\Omega)\cap W_p^{1,0}(\Omega)} \{u\} \times D_0(u),\\
\textbf{A} (u,U)&=\left(-\Delta_x u, [\Phi_* \mathcal{A}_x \Phi^* \hat{U}(x)] \right),&& \text{for } (u,U)\in dom(\textbf{A}).
\end{align*}
Observe that the operator contains the matching condition \eqref{match}. The exchange term $q(U)$ will appear as a term on the right hand side of the abstract problem \eqref{P}.
Let $(f,g) \in L_p(\Omega)\times L_p(\Omega,L_p(\Omega_x))$, $\lambda \in S_{\theta,\omega}$. We consider the system
\begin{align}
 (\lambda + \A) (u,U) = (f,g), \qquad \text{ for } (u,U)\in dom(\A).\label{problem}
\end{align}
This formally corresponds to
\begin{align}
 \lambda u-\Delta_x u&=f,&& u\in W_p^2(\Omega)\cap W_p^{1,0}(\Omega),\label{7}\\
\lambda U-\Delta_z U &=g,&& U\in D_0(u).\label{8}
\end{align}
\begin{prop}\label{Prop8}
 The operator $-\A$ is the generator of an analytic semigroup on the space $L_p(\Omega)\times L_p(\Omega,L_p(\Omega_x))$.
\end{prop}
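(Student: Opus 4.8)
The plan is to verify that $-\A$ meets the hypotheses of the standard generation theorem for analytic semigroups (cf.\ \cite{Luna}): I will show that $\A$ is closed, densely defined, and that for a suitable sector $S_{\theta,\omega}$ with $\theta\in(\pi/2,\pi)$ one has $S_{\theta,\omega}\subset\rho(-\A)$ together with $\|R(\lambda,\A)\|\le M/|\lambda-\omega|$. The structural observation driving the argument is that the system \eqref{7}--\eqref{8} is triangular: \eqref{7} involves only $u$, while the matching condition $\tr U=u$ feeds $u$ into \eqref{8} but not conversely. Fix $\lambda\in S_{\theta,\omega}$, with $\theta,\omega$ chosen so that $A_1$ and $\A_2$ are both sectorial there; this is possible since $A_1$ is sectorial and $\A_2$, being similar to $\B$ via the isomorphism $\Phi_*$ (indeed $dom(\A_2)$ corresponds under $\Phi^*$ to $dom(\B)$), is sectorial with $R(\lambda,\A_2)=\Phi_*R(\lambda,\B)\Phi^*$ and the associated uniform estimate. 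One is then forced to set $u:=R(\lambda,A_1)f$.

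For \eqref{8} I would remove the inhomogeneous trace by the retraction $R$ of Section~\ref{1.1}. Writing $U=W+Ru$, the constraint $U\in D_0(u)$ becomes $W\in dom(\A_2)$; moreover, by \eqref{null} we have $\Delta_z Ru=0$, so that the second component of $\A(0,Ru)$ vanishes — this is because $R_S$ sends the boundary constant $u(x)\cdot 1_S$ to the corresponding constant function on $B$, on which $\mathcal{A}_x$ acts as $0$. Hence \eqref{8} reduces to $(\lambda+\A_2)W=g-\lambda Ru$, solved by $W:=R(\lambda,\A_2)(g-\lambda Ru)$. Assembling the pieces yields, for every $\lambda\in S_{\theta,\omega}$,
\begin{align*}
R(\lambda,\A)(f,g)=\bigl(R(\lambda,A_1)f,\ R(\lambda,\A_2)\bigl(g-\lambda RR(\lambda,A_1)f\bigr)+RR(\lambda,A_1)f\bigr),
\end{align*}
and a direct computation using \eqref{null} confirms that this pair indeed solves $(\lambda+\A)(u,U)=(f,g)$; injectivity of $\lambda+\A$ is immediate from the same triangular structure. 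In particular $\lambda\in\rho(-\A)$, whence $\A$ is closed.

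The remaining point is the sectorial estimate. Using $\|R(\lambda,A_1)\|\le M_1/|\lambda-\omega|$, $\|R(\lambda,\A_2)\|\le M_2/|\lambda-\omega|$ and the continuity of $R\colon L_p(\Omega)\to L_p(\Omega,W_p^2(\Omega_x))\hookrightarrow L_p(\Omega,L_p(\Omega_x))$, every summand in the formula above is controlled; the only delicate term is $R(\lambda,\A_2)\bigl(\lambda RR(\lambda,A_1)f\bigr)$, which is bounded by $c\,M_1M_2\,|\lambda|/|\lambda-\omega|^2\,\|f\|$. After enlarging $\omega$ — shrinking $\theta$ if necessary so that the two auxiliary sectors remain inside the new one — one has $|\lambda|/|\lambda-\omega|^2\le c'/|\lambda-\omega|$ on $S_{\theta,\omega}$, so this term decays at the required rate as well. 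I expect this bookkeeping with the extra factor $\lambda$ generated by the lift $Ru$, together with matching up the sectors and constants of $A_1$ and $\A_2$, to be the main obstacle.

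Finally, for density of $dom(\A)$, fix $(f^*,g^*)$ in the product space, choose $u_n\in W_p^2(\Omega)\cap W_p^{1,0}(\Omega)$ with $u_n\to f^*$ in $L_p(\Omega)$, and use that $dom(\A_2)$ is dense in $L_p(\Omega,L_p(\Omega_x))$ (Lemma~\ref{Lemma2}, since $W_p^2(B)\cap W_p^{1,0}(B)$ is dense in $L_p(B)$) to pick $W_n\in dom(\A_2)$ with $\|W_n-(g^*-Ru_n)\|\le 1/n$. Then $U_n:=W_n+Ru_n$ satisfies $\tr U_n=u_n$, so $(u_n,U_n)\in dom(\A)$, and $(u_n,U_n)\to(f^*,g^*)$. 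With $\A$ closed, densely defined, and sectorial of angle larger than $\pi/2$, the generation theorem gives that $-\A$ generates an analytic semigroup on $L_p(\Omega)\times L_p(\Omega,L_p(\Omega_x))$.
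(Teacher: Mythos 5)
Your argument follows the paper's own proof quite closely: the same triangular reduction of \eqref{7}--\eqref{8}, the same use of the retraction $R$ and of \eqref{null} to move the inhomogeneous trace into the source term, and the same explicit resolvent formula. The density argument you add is correct (the paper does not spell it out) and Lemma~\ref{Lemma2} indeed supplies the cell-scale density.

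There is, however, one concrete error in the sectorial estimate. You bound the delicate cross term by $cM_1M_2\,|\lambda|/|\lambda-\omega|^2\,\|f\|$ and then assert that, after enlarging $\omega$ and shrinking $\theta$, one has $|\lambda|/|\lambda-\omega|^2\le c'/|\lambda-\omega|$ on $S_{\theta,\omega}$, i.e.\ $|\lambda|\le c'\,|\lambda-\omega|$. This inequality is \emph{false} for any $\omega\neq0$: letting $\lambda\to\omega$ inside the sector, the left side tends to $|\omega|>0$ while the right side tends to $0$, so the ratio $|\lambda|/|\lambda-\omega|$ blows up precisely at the vertex, and enlarging $\omega$ only makes the mismatch worse. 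The paper sidesteps this cleanly by first normalizing \emph{without loss of generality} $\omega=0$ (replace $\A$ by $\A+\omega$, which shifts both auxiliary vertices to $\le0$); then $|\lambda|/|\lambda-\omega|=1$ and the three powers of $|\lambda|$ — from the prefactor, from $\|R(\lambda,\B)\|\le M_2/|\lambda|$, and from $|\lambda|\,\|R(\lambda,A_1)f\|\le M_1\|f\|$ — cancel exactly, giving a $\lambda$-independent constant. An alternative repair is to keep the two original vertices $\omega_1,\omega_2$ distinct from the new vertex $\omega''>\max(\omega_1,\omega_2)$: then $\omega_1,\omega_2\notin\overline{S_{\theta,\omega''}}$, so $|\lambda-\omega_i|$ is bounded below near $\omega''$ and $|\lambda|/(|\lambda-\omega_1||\lambda-\omega_2|)\le C/|\lambda-\omega''|$ does hold; but with a single symbol $\omega$ on both sides of your inequality, as written, the estimate fails. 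I recommend adopting the paper's $\omega=0$ normalization — it makes the bookkeeping you flagged as ``the main obstacle'' disappear.
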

\begin{proof}
Let $\omega_i, \theta_i$ such that $S_{\theta_1,\omega_1} \subset \rho(-A_1)$, $S_{\theta_2,\omega_2} \subset \rho(-\A_2)$. Set
\begin{align*}
 \omega &= \max\{\omega_1, \omega_2\},&& \theta = \min \{\theta_1,\theta_2\}.
\end{align*}
Then $S_{\theta, \omega} \subset  \rho(-A_1)\cap \rho(-\A_2)$. Take $\lambda \in S_{\theta,\omega}$. Without restriction we suppose $\omega=0$. Since $A_1$ is sectorial the function $u=R(\lambda, A_1)f$ solves \eqref{7}. Furthermore there is $M_1\geq 1$ such that 
$$|\lambda| \|u\| \leq \||\lambda| R(\lambda, A_1) f\|\leq M_1 \|f\|.$$
For $U \in D_0(u)$, it holds
$$ U-Ru\in L_p(\Omega,W_p^2(\Omega_x))\cap \ker \tr=dom(\A_2).$$
Here $R$ is the extension operator defined in the previous chapter. So \eqref{null} implies that \eqref{8} is equivalent to
\begin{align}
 \lambda(U-Ru)+ \A_2(U-Ru)=g-\lambda Ru.\label{9}
\end{align}
Since $\B$ is sectorial, \eqref{9} has the unique solution 
$$U=\Phi_* R(\lambda, \B)\Phi^* (g-\lambda Ru)+Ru.$$
So we have shown that (\ref{problem}) has a unique solution for $\lambda\in S_{\theta,\omega}$. 
Hence we conclude that $\lambda \in \rho(-\textbf{A})$. To shorten the notation, we write $X_0 = L_p(\Omega,L_p(\Omega_x))$. It holds
\begin{align*}
 |\lambda| \| U\|
&\leq |\lambda| \|\Phi_* R(\lambda,\B)\Phi^* g\|_{X_0} + |\lambda | \|\Phi_* R(\lambda,\B)\Phi^*\lambda R\ R(\lambda, A_1)f\|_{X_0} \\ &\qquad +|\lambda|  \|R\ R(\lambda, A_1)f\|_{X_0}\\
&\leq M_2\|\Phi_*\| \|\Phi^*\| \|g\|_{X_0} +M_2\|\Phi_*\| \|\Phi^*\|\|R\| M_1\|f\|_{L_p(\Omega)} + M_1 \|R\| \|f\|_{L_p( \Omega)}.
\end{align*}
With this we estimate the norm of the resolvent of $-\textbf{A}$
\begin{align*}
 |\lambda| \|R(\lambda,\textbf{A})\|&=\sup\{ |\lambda| \|U\| + |\lambda|\|u\| ;u=R(\lambda, A_1)f,\\
& \qquad \qquad U=R(\lambda, \A_2)(g-\lambda Ru)+Ru, \|f\|+\|g\|\leq 1\},\\
&\leq M_2\|\Phi_*\| \|\Phi^*\| + M_1 M_2\|\Phi_*\| \|\Phi^*\| \|R\| + M_1\|R\| + M_1=: M.
\end{align*}
Hence the sector is contained in the resolvent set $\rho(-\textbf{A})$ and the inequality above holds for some constant $M\geq 1 $ independent of $\lambda$. Hence $\textbf{A}$ is sectorial and $-\A$ generates a holomorphic semigroup.
\end{proof}
We are now prepared to write the matched microstructure problem as an abstract evolution equation. Set $w=(u,U)$, $w_0=(u_0,U_0)$.
We look for $w$ satisfying
\begin{align}\label{P}
 \begin{cases}
  \partial_t w + \A w  =f(w),& t\in(0,T)\\
 w(0)= w_0.&
 \end{cases}\end{align}
To solve this semilinear problem we take $\frac12<\Theta<1$. Our goal then is to show that
$$f: (0,T) \times [Y_0, D(\A)]_\Theta \to Y_0:=L_p(\Omega)\times L_p(\Omega,L_p(\Omega_x)) $$
is locally H\"older continuous in $t$ and locally Lipschitz continuous in $w$. For the initial value we will require that $w_0\in [Y_0,D(\A)]_\Theta$. Then results from Amann \cite{NLQ} imply local existence and uniqueness.
\subsection{Interpolation and Existence Results}\label{3.3}

Let $X_0, X_1$ be two Banach spaces that form an interpolation couple.
Let $0\leq\Theta\leq  1$. We denote with $[X_0,X_1]_\Theta$ the
 complex interpolation space of order $\Theta$. 
Let $\tr_{\Gamma}: W_p^1(\Omega) \to W_p^{1-\frac1p}(\Gamma)$ be the trace operator on $\Omega$. 
R. Seeley showed in \cite{Seeley} that 
\begin{align}\label{15}
 \left[L_p(\Omega), W_p^2(\Omega) \cap \ker \tr_{\partial \Omega} \right]_\Theta = \begin{cases} W_p^{2\Theta}, & \text{if } 2\Theta < \frac1p,\\
W_p^{2\Theta}(\Omega)\cap \ker \tr_{\partial \Omega}, & \text{if } 2\Theta > \frac1p.
\end{cases}
\end{align} 
He actually gives a proof for any normal boundary system (defined in the sense of \cite{Seeley}, \textsection 3). To determine $[Y_0,D(\A)]_\Theta$ we start with the case of uniform
 spherical cells $B$. Let $0<\Theta<1$. 
 We set
\begin{align*}
 X_0 &= L_p(\Omega) \times L_p(\Omega, L_p(B)),\\
 X_1 &= \left( W_p^2(\Omega) \cap \ker \tr_\Gamma\right) \times L_p(\Omega, W_p^2(B)\cap \ker \tr_S).
\end{align*}
Then $\{X_0,X_1\}$ is an interpolation couple. Due to Proposition I.2.3.3 in \cite{Amann} it suffices to interpolate both factors separately. Let $2\Theta > \frac1p$. We deduce from \eqref{15}
$$  \left[ L_p(\Omega), W_p^2(\Omega)\cap \ker \tr_\Gamma\right]_\Theta = \ker \tr_\Gamma \cap W_p^{2\Theta} (\Omega).$$
Further the results in \cite{Calderon} and \eqref{15} show that
\begin{align*}
\left[L_p(\Omega, L_p(B)), L_p(\Omega, W_p^2(B) \cap \ker \tr_S)\right]_\Theta
 &= L_p(\Omega, \left[ L_p(B), W_p^2(B)\cap \ker \tr_S\right]_\Theta ),\\
&= L_p(\Omega, W_p^{2\Theta}(B)) \cap \ker \tr_S.
\end{align*}
The map $(I, \Phi_*): (u,U) \mapsto (u, \Phi_* U)$ is an isomorphism. It maps
\begin{align*}
  X_0 = L_p(\Omega) \times L_p(\Omega, L_p(B)) \to L_p(\Omega)\times L_p(\Omega, L_p(\Omega_x))=: Y_0
\end{align*}
as well as 
 $$ X_1 \to (W_p^2(\Omega)\cap \ker \tr_\Gamma) \times L_p(\Omega, W_p^2(\Omega_x)) \cap \ker \tr =: Y_1.$$
So $\{Y_0,Y_1\}$ is an interpolation couple and Proposition I 2.3.2 from \cite{Amann} implies 
\begin{align*}
\left[ Y_0,Y_1\right]_\Theta &= (I, \Phi_*) \left[X_0,X_1\right]_\Theta= \left(W_p^{2\Theta}(\Omega)\cap \ker \tr_\Gamma\right) \times \left(L_p(\Omega, W_p^{2\Theta}(\Omega_x)) \cap \ker \tr\right)
\end{align*}
for $2\Theta > \frac1p$. Finally we define the isomorphism
\begin{align*}
 J: Y_0 &\to Y_0: (u,U) \mapsto (u, U + Ru).
\end{align*}
Here $R$ is the retraction of the lifted trace. Then $J$ maps $Y_1$ onto $D(\textbf{A})$. Clearly
$T: Y_0\to Y_0 : (u,U) \mapsto (u, U- Ru)$ 
is the inverse of $J$. So $J$ fulfills the conditions of Proposition I 2.3.2 from \cite{Amann} for the interpolation couples $\{Y_0,Y_1\}$ and $\{Y_0, D(\textbf{A})\}$. Hence it maps $ \left[Y_0,Y_1\right]_\Theta$ onto $\left[Y_0, D(\textbf{A})\right]_\Theta.$ So for $2 \Theta > \frac1p$, we get explicitely
\begin{align*}
 \left[Y_0, D(\textbf{A})\right]_\Theta &= J\left( \left[Y_0,Y_1\right]_\Theta\right)= \bigcup_{\begin{subarray}{l}u\in W_p^{2\Theta} (\Omega)\\ \quad \cap \ker \tr_\Gamma\end{subarray} } \{u\} \times \{U\in L_p(\Omega, W_p^{2\Theta}(\Omega_x)); \tr U = u\}.
\end{align*}
If $2 \Theta < \frac1p$ the boundary condition drops in both scales. Hence we conclude
$$ \left[Y_0, D(\A)\right]_\Theta = W_p^{2\Theta}(\Omega)\times L_p(\Omega, W_p^{2\Theta}(\Omega_x)).$$
A similar analysis can be done for real interpolation functors. In particular
\begin{align}
(Y_0, D(\textbf{A}))_{1-\frac1p,p}& = \bigcup_{\begin{subarray}{l}u\in W_p^{2-\frac2p} (\Omega)\\ \quad \cap \ker \tr_\Gamma\end{subarray} } \{u\} \times \{U\in L_p(\Omega, W_p^{2-\frac2p}(\Omega_x)); \tr U = u\}.
\end{align}
Let $0<\Theta < 1$ and write $X^\Theta = [Y_0, D(\A)]_\Theta.$
We consider functions 
\begin{equation}\label{rechts}
 \underline{f}= (f,g): [0,\infty) \times  X^\Theta \to Y_0.
\end{equation}
\begin{thm}\label{allgemein}
 Let $\underline{f} = (f,g)$ be as in \eqref{rechts}. Assume 
$$\underline{f}\in C^{1-}([0,\infty) \times X^\Theta, Y_0),$$ 
is locally Lipschitz continuous for some $0<\Theta<1$. Then for any $(u_0,U_0) \in X^\Theta$, there exists 
$T=T(u_0, U_0, \Theta)>0$, such that (\ref{P}) has a unique strong solution $w=(u,U)$ on $(0,T)$ which satisfies initial conditions
\begin{align*}
 u(t=0)&= u_0 \qquad\text{ and }\qquad U(t=0)=U_0.
\end{align*}
In particular
\begin{align*}
w\in C^1\left(([0,T), Y_0\right)\cap C\left([0,T), X^\Theta\right). 
\end{align*}
\end{thm}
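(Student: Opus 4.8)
The plan is to apply the standard abstract theory for semilinear parabolic equations with sectorial generator, in the form developed by Amann in \cite{NLQ} (see also \cite{Amann}, Ch.~II and Lunardi \cite{Luna}). All the hard analytic work has in fact already been done: Proposition \ref{Prop8} tells us that $-\A$ generates an analytic semigroup on $Y_0$, and the computations in Section \ref{3.3} identify $X^\Theta = [Y_0, D(\A)]_\Theta$ explicitly. What remains is to package these facts together with the hypothesis on $\underline f$.

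First I would recall the precise abstract statement I intend to invoke. If $-\A$ is the generator of an analytic semigroup on a Banach space $Y_0$, if $0<\Theta<1$, and if $\underline f : [0,\infty)\times X^\Theta \to Y_0$ (with $X^\Theta=[Y_0,D(\A)]_\Theta$) is locally H\"older continuous in $t$ and locally Lipschitz continuous in the state variable, then for every $w_0\in X^\Theta$ the problem \eqref{P} has a unique maximal strong solution
\begin{align*}
 w \in C\bigl([0,T), X^\Theta\bigr)\cap C^1\bigl((0,T), Y_0\bigr)\cap C\bigl((0,T), D(\A)\bigr),
\end{align*}
with $w(0)=w_0$, on some maximal interval $[0,T)$ with $T>0$. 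This is exactly Theorem 2 (or the relevant existence theorem) of \cite{NLQ}; the role of the interpolation space $X^\Theta$ as the phase space is what makes the Lipschitz hypothesis on $\underline f$ the right one. The regularity up to $t=0$ in $Y_0$, i.e.\ $w\in C^1([0,T),Y_0)$ rather than only on the open interval, follows because $w_0\in X^\Theta$ with $\Theta$ fixed is a sufficiently regular initial datum; one invokes the corresponding continuity-up-to-zero part of the cited theorem (or argues directly from the variation-of-constants formula $w(t)=e^{-t\A}w_0+\int_0^t e^{-(t-s)\A}\underline f(s,w(s))\,ds$, using $w_0\in X^\Theta$ and the smoothing estimates $\|\A e^{-t\A}\|\lesssim t^{-1}$, $\|\A^{\Theta}e^{-t\A}\|\lesssim t^{-\Theta}$).

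Concretely the steps are: (1) Quote Proposition \ref{Prop8} to get that $-\A$ generates an analytic semigroup on $Y_0=L_p(\Omega)\times L_p(\Omega,L_p(\Omega_x))$; (2) quote the identification of $X^\Theta=[Y_0,D(\A)]_\Theta$ from Section \ref{3.3}, so that the hypothesis ``$\underline f \in C^{1-}([0,\infty)\times X^\Theta, Y_0)$, locally Lipschitz'' is literally the hypothesis required by the abstract theorem; (3) apply the abstract local existence and uniqueness theorem of \cite{NLQ} with phase space $X^\Theta$ and initial datum $w_0=(u_0,U_0)\in X^\Theta$ to obtain $T>0$ and a unique strong solution $w=(u,U)$ on $(0,T)$ with $w(0)=w_0$; (4) read off the asserted regularity $w\in C^1([0,T),Y_0)\cap C([0,T),X^\Theta)$, and unwind the definition of $\A$ (which encodes the matching condition $\tr U = u$ and the Laplacians $-\Delta_x$, $-\Delta_z$ in the transported coordinates, cf.\ Lemma 3.7(d)) together with the definition of $\underline f$ (which supplies $f(t,u)$ and the exchange term $q(U)$) to see that $w$ is indeed a strong solution of the original matched microstructure system (MM). Since $0<\Theta<1$ is arbitrary in the hypothesis, no lower bound on $\Theta$ is needed here; the refinement $\tfrac12<\Theta<1$ mentioned before \eqref{P} is only what one uses when one wants to verify the Lipschitz hypothesis for a concrete nonlinearity built from $q$ and a Nemytskii-type $f$.

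The main (and essentially only) obstacle is a bookkeeping one: checking that every hypothesis of the abstract theorem in \cite{NLQ} is met in the exact form stated there — in particular that $D(\A)$ with its graph norm is the correct $X_1$, that $\A$ is closed and densely defined (which follows from Lemma \ref{Lemma2} and the corresponding density of $dom(A_1)$ and $dom(\A_2)$), and that the interpolation space $X^\Theta$ used as phase space matches \cite{NLQ}'s conventions (complex interpolation, continuous-interpolation, or real interpolation spaces may appear in different formulations, but for a fixed $0<\Theta<1$ the theorem applies in each). There is no new estimate to prove: the sectoriality estimate with constant $M$ independent of $\lambda$ established in the proof of Proposition \ref{Prop8} is precisely what feeds the analytic-semigroup smoothing bounds underlying the fixed-point argument in \cite{NLQ}, so once the identifications in steps (1)--(2) are in place, steps (3)--(4) are a direct citation and an unwinding of notation.
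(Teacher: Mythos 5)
Your proposal follows the same route as the paper: verify the hypotheses (analytic semigroup from Proposition~\ref{Prop8}, identification of $X^\Theta$ from Section~\ref{3.3}, Lipschitz nonlinearity) and invoke the abstract semilinear local existence theory from Amann~\cite{NLQ}; the paper cites Theorem~12.1 and Remark~12.2(b) there and \cite{Amann84} for the regularity up to $t=0$, whereas you sketch the variation-of-constants argument, but the substance is identical.
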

\begin{proof}
With the above considerations, Theorem 12.1 and Remark 12.2. (b) from \cite{NLQ} can be applied to the abstract equation (\ref{P}). The regularity results are proved in 
\cite{Amann84}.
\end{proof}
In order to state our main result on the matched microstructure model let
$$X_1^\Theta := W_p^{2\Theta} (\Omega)\cap \ker \tr_{\partial \Omega}, \qquad \qquad \Theta > \frac{1}{2p},$$
denote the first component of the interpolation space $X^\Theta$. 
\begin{cor} 
Let $$f\in C^{1-}\left([0,\infty)\times X_1^{1/2+1/2p}, L_p(\Omega)\right)$$ be given. Then for $(u_0, U_0)\in X^{1/2+1/2p}$, there exists $T>0$, such that the matched microstructure 
problem (MM) has a unique strong solution on $(0,T)$.
\end{cor}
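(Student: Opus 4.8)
The plan is to obtain the corollary as a direct specialization of Theorem~\ref{allgemein} with the particular choice $\Theta = \tfrac12 + \tfrac1{2p}$, which indeed satisfies $\tfrac12 < \Theta < 1$ for every $2 \le p < \infty$, and which forces $2\Theta = 1 + \tfrac1p > \tfrac1p$, so that the explicit description of $X^\Theta$ with both boundary conditions retained applies. Thus $X^{1/2+1/2p}$ is exactly the set of pairs $(u,U)$ with $u \in W_p^{1+1/p}(\Omega)\cap\ker\tr_\Gamma$ and $U \in L_p(\Omega, W_p^{1+1/p}(\Omega_x))$ with $\tr U = u$, and $X_1^{1/2+1/2p} = W_p^{1+1/p}(\Omega)\cap\ker\tr_{\partial\Omega}$ is its first component. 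The first step is therefore to record that $w_0 = (u_0,U_0)\in X^{1/2+1/2p}$ is precisely the initial regularity hypothesis demanded by Theorem~\ref{allgemein}.

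The second step is to exhibit the right-hand side $\underline{f}$ of the abstract problem \eqref{P} in the form \eqref{rechts} and to verify its regularity. Here $\underline{f}(t,w) = \underline{f}(t,(u,U)) = \bigl(f(t,u) + q(U),\, 0\bigr)$, where $f$ is the given nonlinearity and $q$ is the exchange operator defined in Section~\ref{TransOp}. The claim is that $\underline{f} \in C^{1-}([0,\infty)\times X^{1/2+1/2p}, Y_0)$, i.e. it is continuous and locally Lipschitz in $w$ uniformly on compact $t$-intervals. For the first component this splits into two contributions: the map $(t,u)\mapsto f(t,u)$ is locally Lipschitz into $L_p(\Omega)$ by hypothesis, since the domain of $f$ in the corollary is exactly $X_1^{1/2+1/2p}$, the first factor of $X^{1/2+1/2p}$, and $u\mapsto (u,U)$-projection is bounded linear; and the map $U \mapsto q(U)$ is \emph{linear and bounded} from $L_p(\Omega, W_p^2(\Omega_x))$ into $L_p(\Omega)$, hence a fortiori bounded, and thus globally Lipschitz, on the larger space $L_p(\Omega, W_p^{1+1/p}(\Omega_x))$ — one checks from the integral formula for $q$ that it only involves a first-order trace on $S$, which is continuous on $W_p^{1+1/p}$ since $1 + \tfrac1p > 1$. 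The second component of $\underline{f}$ is identically zero, hence trivially in $C^{1-}$.

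The third step is simply to invoke Theorem~\ref{allgemein} with this $\underline{f}$ and this $\Theta$: it yields $T>0$ and a unique strong solution $w = (u,U) \in C^1([0,T),Y_0)\cap C([0,T),X^{1/2+1/2p})$ of \eqref{P} with $w(0) = w_0$. The final step is to translate this abstract solution back into a solution of (MM): unwinding the definition of $\A$ and of the spaces, the first component gives $\partial_t u - \Delta_x u = f(t,u) + q(U)$ on $\Omega$ with $u = 0$ on $\Gamma$; the second component, together with the fact that membership of $(u,U)$ in $\dom(\A)$ encodes $\tr U = u$, gives $\partial_t U - \Delta_z U = 0$ in each cell $\Omega_x$ with $U = u$ on $\Gamma_x$ (via the pull-back $\Phi^*$, $\Phi_*$ and \eqref{null}); and the initial conditions $u(0) = u_0$, $U(0) = U_0$ hold. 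This is exactly the matched microstructure problem (MM).

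The only genuinely substantive point — the ``main obstacle'' if there is one — is the mapping property and Lipschitz estimate for $q$, namely that $q$ extends to a bounded operator on the interpolation space $L_p(\Omega, W_p^{1+1/p}(\Omega_x))$ rather than only on $L_p(\Omega, W_p^2(\Omega_x))$. This is where the hypothesis $\Theta > \tfrac1{2p}$ (equivalently $2\Theta > \tfrac1p$, and here in fact $2\Theta = 1 + \tfrac1p$) is used: it guarantees that the Sobolev order $2\Theta$ exceeds $1$, so the normal-derivative-type boundary functional appearing in $q$ is well defined and continuous on the cell space via the trace theorem (after transport by $\Phi^*$ to the standard ball $B$, where the trace $W_p^{1+1/p}(B)\to W_p^{1/p}(S)$ is bounded and $g^{ij}\partial_{y_i}\hat V\cdot\nu_j$ is a first-order expression with $L_\infty$ coefficients by \eqref{cond4}). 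Everything else is a routine verification of hypotheses and an unwinding of notation.
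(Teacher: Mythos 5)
Your proposal follows the same route as the paper: specialize Theorem~\ref{allgemein} to $\Theta=\frac12+\frac1{2p}$, observe that the only nontrivial hypothesis is the local Lipschitz continuity of $q$ on the interpolation space $X^\Theta$, establish that via the explicit integral formula for $q$ together with the boundedness of the metric coefficients and a trace estimate on $S$, and finally plug $\underline{f}(t,u,U)=(f(t,u)+q(U),0)$ into the abstract existence theorem. The paper writes out the $L_p$-estimate for $q(U)$ in full, whereas you merely cite the trace theorem, but the content is identical.

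One sentence in your argument is phrased backwards: you write that $q$ is bounded from $L_p(\Omega,W_p^2(\Omega_x))$ into $L_p(\Omega)$ and is therefore \emph{a fortiori} bounded on the larger space $L_p(\Omega,W_p^{1+1/p}(\Omega_x))$. That implication runs in the wrong direction: boundedness with respect to the stronger $W_p^2$-norm says nothing about boundedness with respect to the weaker $W_p^{1+1/p}$-norm. You do recover immediately, since the very next clause gives the correct justification (the normal-derivative trace together with the uniform bounds \eqref{cond4} on the coefficients $g^{ij}\sqrt{|g|}$), but the ``a fortiori'' should be deleted; the bound on $W_p^{1+1/p}$ is what has to be proved, not inherited from the $W_p^2$ case.
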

\begin{proof}
Let $\Theta=\frac12+\frac{1}{2p}$. Let $U\in L_p(\Omega, W_p^{2 \Theta}(\Omega_x))$, $V= \Phi_* U$. 
 Since $q(\cdot)$ is time independent it remains to show the Lipschitz continuity in $U$. There is $C>0$, such that
 \begin{align*}
& \|q(U)\|^p_{L_p(\Omega)}\\ &\leq \int_\Omega  \left| \int_S \sqrt{|g(x,s)|} g^{ij}(x,s) \partial_{y_i} \hat{V}(x,s) \nu_j \, ds \right|^p \, dx\\
&\leq c^p_p\max_{\tiny{\begin{array}{c}
                  (x,y)\in \overline{\Omega}\times \overline{B},\\ i,j
                 \end{array}}}
 \left\{\sqrt{|g|(x,y)}\left|g^{ij}(x,y)\right|\right\}^p \int_\Omega \sum_{k=1}^n \int_S |\partial_{y_k} \hat{V}(x,s)|^p \,ds \, dx \\
&\leq C \int_\Omega \|\hat{V}(x)\|^p_{W_p^{2\Theta}(B)}\, dx = C\|U \|^p_{L_p(\Omega, W_p^{2\Theta} (\Omega_x))}.
 \end{align*}
The constant $c_p$ is the embedding constant of $L_p(B)$ into $L_1(B)$. Together with the linearity of $q$ this shows that $q$ is locally Lipschitz continuous in $$W_p^{1+\frac12}(\Omega)\times L_p(\Omega, W_p^{1+\frac12}(\Omega_x)) \supset X^\Theta.$$ 
Setting finally $\underline{f}(u,U) := (f(u)+q(U),0)$, the assumption follows from Theorem \ref{allgemein}.
\end{proof}
\subsection{Exponential Decay under Dirichlet Boundary Conditions, Neumann BC}\label{qualitativ}
Now we assume that there are no external sources in the system, i.e. $f=0$. We will show that the corresponding solutions decay exponentially fast to zero. First we investigate the spectrum of $\A$.
Let $p=2$. The space $L_2(\Omega) \times L_2(\Omega, L_2(B))$ is a Hilbert space. This implies that $Y_0$ is a Hilbert space with the inner product
$$((u,U), (w,W))_{Y_0} = (u,w)_{L_2(\Omega)} + (\Phi_* U, \Phi_* W)_{L_2(\Omega\times B)}$$
for $(u,U), (w,W) \in Y_0$.
We introduce an extended operator $\A_q$ by
\begin{align*}
 dom(\A_q) &= dom (\A),\\
 \A_q (u,U) & = \A(u,U) - (q(U),0), && \text{for all } (u,U)\in dom(\A_q).
\end{align*}
We investigate the spectrum of $\A_q$. It is convenient to introduce a weighted space $Y_g$. Set
\begin{align*}
 Y_g&= L_2(\Omega)\times L_2(\Omega, L_2(\Omega_x, \sqrt{|g|})),\\
\|(u,U)\|^2_{Y_g} &= \|u\|^2_{L_2(\Omega)} + \|\Phi_* U \|^2_{L_2( \Omega, L_2(B,\sqrt{|g|}))}.
\end{align*}
This is a well defined Hilbert space with respect to the inner product 
\begin{align*}
 ((u,U), (w,W))_{Y_g} &= \int_\Omega uw + \int_{\Omega\times B} \sqrt{|g|}\Phi^* U \Phi^* V,&&\text{for }(u,U), (w,W) \in Y_g.
 \end{align*} We can show that with this modified inner product $Y_g$ is also a Hilbert space. 
\begin{lem}
The operator $\A_q$ is self adjoint in $Y_g$.
\end{lem}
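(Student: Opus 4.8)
\emph{Strategy.} I would prove the statement in two stages: first show that $\A_q$ is \emph{symmetric} on $Y_g$ — this is precisely the point at which the exchange term $q$ and the Riemannian density $\sqrt{|g|}$ are tailored to one another — and then upgrade symmetry to self-adjointness by exploiting that $\A_q$ is a perturbation of the sectorial operator $\A$ of relative bound zero, so that a real number still lies in $\rho(-\A_q)$. Throughout I would use that, by \eqref{cond4}, $\sqrt{|g(x,y)|}$ is bounded between two positive constants uniformly in $(x,y)$; hence the norms of $Y_g$ and $Y_0$ are equivalent, $dom(\A_q)=dom(\A)$ is dense in $Y_g$, and closedness and resolvent properties transfer freely between $Y_0$ and $Y_g$.

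\emph{Symmetry.} Take $(u,U),(w,W)\in dom(\A_q)$ and set $V=\Phi^*U$, $\tilde W=\Phi^*W$. For the macroscopic component I would integrate by parts on $\Omega$, using $u,w\in W_2^{1,0}(\Omega)$, to obtain $\int_\Omega(-\Delta_xu)w\,dx=\int_\Omega\nabla u\cdot\nabla w\,dx$, with no boundary term. For the cell component the transformation rule turns the $Y_g$-pairing into $\int_{\Omega\times B}\sqrt{|g|}\,(\mathcal A_xV)\,\tilde W\,dy\,dx$, and I would integrate the Laplace--Beltrami operator $\mathcal A_x$ by parts on $B$:
\[
\int_B\sqrt{|g|}\,(\mathcal A_xV)\,\tilde W\,dy=\int_B\sqrt{|g|}\sum_{i,j} g^{ij}\,\partial_{y_i}V\,\partial_{y_j}\tilde W\,dy-\int_S\sqrt{|g|}\sum_{i,j} g^{ij}\,\partial_{y_j}V\,\nu_i\,\tilde W\,ds .
\]
Since $\tr W=w$, the boundary values $\tilde W|_S$ equal the constant $w(x)$, so by the definition of $q$ the surface integral equals $-\,w(x)\,q(U)(x)$. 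Adding the two components and cancelling the terms $-\int_\Omega q(U)w$ and $+\int_\Omega w\,q(U)$ yields
\[
(\A_q(u,U),(w,W))_{Y_g}=\int_\Omega\nabla u\cdot\nabla w\,dx+\int_{\Omega\times B}\sqrt{|g|}\sum_{i,j} g^{ij}\,\partial_{y_i}V\,\partial_{y_j}\tilde W\,dy\,dx .
\]
As $g^{ij}=g^{ji}$, the right-hand side is symmetric in the two arguments, so $(\A_q(u,U),(w,W))_{Y_g}=((u,U),\A_q(w,W))_{Y_g}$; thus $\A_q$ is symmetric, and the associated quadratic form is the natural non-negative Dirichlet form of the problem.

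\emph{Self-adjointness.} The estimate established in the proof of the Corollary shows that $(u,U)\mapsto(q(U),0)$ maps $X^\Theta=[Y_0,D(\A)]_\Theta$ boundedly into $Y_0$ for some $\Theta<1$; by the interpolation inequality between $Y_0$ and $D(\A)$ this makes it an $\A$-bounded perturbation of relative bound zero. Invoking the standard perturbation theorem for sectorial operators (cf.\ \cite{Luna}), $\A_q=\A-(q(\cdot),0)$ is again sectorial on $Y_0$, hence on $Y_g$; in particular there is a real $\lambda_0$ with $\lambda_0+\A_q$ boundedly invertible on all of $Y_g$. Since a bounded, everywhere-defined, symmetric operator on a Hilbert space is self-adjoint, $(\lambda_0+\A_q)^{-1}$, and therefore $\lambda_0+\A_q$ and $\A_q$, are self-adjoint. (Alternatively one can solve $(\lambda_0+\A_q)(u,U)=(f,g)$ directly for $\lambda_0$ large, as in the proof of Proposition \ref{Prop8}, absorbing the affine-in-$u$ term $q(U)$ into the elliptic equation for $u$.)

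\emph{Main obstacle.} The real work is the symmetry computation: one has to push Green's formula through the diffeomorphism $\Phi$ correctly and check that the surface integral over $S$ generated by $\mathcal A_x$ is exactly $q(U)$ paired with $w$. This is what dictates the choice of the weighted space $Y_g$ — the Laplace--Beltrami operator is symmetric for the volume element $\sqrt{|g|}\,dy$, not $dy$ — and explains why it is $\A_q$, rather than $\A$, that is self-adjoint. The remaining step, transferring the functional-analytic facts between $Y_0$ and $Y_g$, is routine thanks to the two-sided bound on $\sqrt{|g|}$.
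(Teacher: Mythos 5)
Your proof is correct, and it agrees with the paper on the core computation but differs in the final functional-analytic step.

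For symmetry, your route is essentially the paper's, presented a little more cleanly. The paper integrates $\mathcal A_x$ by parts twice to show $(\A_q \vec u,\vec w)_{Y_g}=(\vec u,\A_q\vec w)_{Y_g}$ plus explicitly exhibiting the cancellations $\pm\int_\Omega q(U)w$ and $\pm\int_\Omega u\,q(W)$, whereas you integrate by parts once and land directly on the symmetric Dirichlet form $\int_\Omega\nabla u\cdot\nabla w+\int_{\Omega\times B}\sqrt{|g|}\,g^{ij}\partial_{y_i}V\,\partial_{y_j}\tilde W$. Both hinge on the same two facts: the matching condition $\tr W=w$ turns the $S$-integral into $w\,q(U)$, and the density $\sqrt{|g|}$ is exactly what makes $\mathcal A_x$ formally self-adjoint on each cell. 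Your observation about why $Y_g$ (rather than $Y_0$) is the right space is the right one.

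For self-adjointness, your approach genuinely diverges. The paper does \emph{not} appeal to sectoriality here; it proves $\operatorname{im}(\A_q)=Y_g$ directly by a two-step correction. Starting from $\A(u,U)=(v,V)$, they note $\A_q(u,U)=(v-q(U),V)$, then solve $\A(w,W)=(q(U),0)$, observe that $W(x)$ is then constant on each cell (Laplace with constant Dirichlet data) so $q(W)=0$, and conclude $\A_q\bigl((u,U)+(w,W)\bigr)=(v,V)$. Symmetry plus surjectivity gives self-adjointness via $\ker(\A_q^*)=\operatorname{im}(\A_q)^\perp=\{0\}$. Your argument instead imports the perturbation result (which the paper invokes only \emph{after} this lemma, to get sectoriality of $\A_q$): $Q$ is $\A$-bounded with relative bound zero, so $\A_q$ is sectorial, hence some real $\lambda_0$ lies in $\rho(-\A_q)$, and a symmetric operator with a real point in its resolvent set is self-adjoint. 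This is valid and arguably more robust (it works verbatim for lower-order symmetric perturbations of this type), but it leans on the perturbation machinery where the paper gives a short, hands-on construction of a preimage that makes surjectivity transparent. Either route is acceptable; the paper's is more elementary and self-contained, yours is more modular.
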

\begin{proof}
Take $(u,U), (w,W) \in dom(\A_q)$. Let $V= \Phi^* U$, $Z=\Phi^* W$. 
 Then
$$((u,U), (w,W))_{Y_g} = (u,w)_{L_2(\Omega)} + (V,Z)_{L_2(\Omega, L_2(B, \sqrt{|g|}))}$$
and
\begin{align}
 &(\A_q(u,U),(w,W))_{Y_q} =  \int_\Omega \left(-\Delta_x u(x) w(x) - q(U)(x)  w(x)\right) \, dx\nonumber \\
&\qquad  - \int_\Omega \int_B \frac{1}{\sqrt{|g|}} \sum_{i,j} 
\partial_{y_i} \left(g^{ij} \sqrt{|g|} \partial_{y_j} V(x,y)\right) Z(x,y) \sqrt{|g|} \, dy \, dx.\label{Beweis}
\end{align}
Manipulation of the last integral in (\ref{Beweis}) by partial integration together with the boundary conditions on $\Omega$ and $B$ shows
\begin{align*}
& -\int_\Omega \int_B \frac{1}{\sqrt{|g|}} \sum_{i,j} 
\partial_{y_i} \left(g^{ij} \sqrt{|g|} \partial_{y_j} V(x,y)\right) Z(x,y) \sqrt{|g|} \, dy \, dx\\
&= -\int_\Omega \int_B   V(x,y) \sum_{i,j}\partial_{y_j}\left( g^{ij} \sqrt{|g|}\partial_{y_i} Z(x,y)\right) \, dy \, dx + \int_\Omega q(U) w - \int_\Omega u\ q(W).
\end{align*}
This implies that
$\A_q$ is symmetric. From the theory of elliptic operators and the representation of $\A$ we can conclude that $\A$ is invertible, in particular we have that $im \A= Y$. Our next step is to show that this holds for $\A_q$ as well. Let $(v,V)\in Y$. We show that there exist $(z,Z)\in dom(A_q)$ with $\A_q(z,Z)=(v,V)$. First we know that there are $(u,U) \in dom(\A)$ such that $\A (u,U)= (v,V)$. Then $$\A_q(u,U) = (v-q(U),V).$$ Clearly $(q(U),0)\in Y$. Again there are functions $(w,W)\in dom( \A)$ with $\A(w,W) = (q(U),0)$. This implies that $W(x) =const.$ for a.e. $x\in \Omega$. Thus $q(W)=0$. Then from the linearity of $\A_q$ it follows that
 \begin{align*}
  \A_q\{ (w,W)+(u,U)\} =(q(U),0) + (v-q(U), V) = (v,V).
 \end{align*}
Thus $\A_q$ is symmetric and $im(\A_q)= Y$. It follows from the fact that
$$ \ker (\A_q^*) = im (\A_q)^\perp = \{0\},$$
that the dual operator is injektiv. Thus $\A_q \subset \A_q^*$ implies the assertion.
\end{proof}
\begin{lem}
 It exists a constant $\sigma>0$ such that
$$ (-\A_q(u,U), (u,U))_{Y_g} \leq-\sigma ((u,U),(u,U))_{Y_g},\, \text{ for all }(u,U)\in dom(\A_q).$$
\end{lem}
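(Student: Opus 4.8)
The plan is to establish a coercivity (Poincaré-type) estimate for $\A_q$ on the Hilbert space $Y_g$ by exploiting the two Dirichlet boundary conditions (on $\Gamma$ for the macro variable and on $S$, equivalently $\Gamma_x$, for the micro variable). Concretely, I would write out $(-\A_q(u,U),(u,U))_{Y_g}$ using the partial-integration identity already derived in the previous lemma. Taking $(w,W)=(u,U)$ there, the cross terms $\int_\Omega q(U)u$ and $\int_\Omega u\,q(U)$ cancel against the $q(U)$-contribution built into $\A_q$, so that
\begin{align*}
(-\A_q(u,U),(u,U))_{Y_g} = -\int_\Omega |\nabla_x u|^2\,dx - \int_\Omega\int_B \sqrt{|g|}\, g^{ij}\,\partial_{y_i}\hat V(x,y)\,\partial_{y_j}\hat V(x,y)\,dy\,dx,
\end{align*}
where $V=\Phi^*U$. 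Both terms are manifestly nonpositive: the first is minus the Dirichlet form on $\Omega$, and the second is nonpositive because the metric $g$ is uniformly elliptic (the bound $C_1|\xi|^2\le g^{ij}\xi_i\xi_j\le C_2|\xi|^2$ from Section~\ref{TransOp}), so that the integrand is bounded below by $C_1\sqrt{|g|}\,|\nabla_y \hat V|^2\ge 0$.

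Next I would convert each nonpositive term into a strictly negative multiple of the corresponding $L_2$-norm via Poincaré's inequality. Since $u\in W_p^{1,0}(\Omega)$ (i.e. $u$ vanishes on $\Gamma$), there is $\lambda_\Omega>0$ with $\int_\Omega|\nabla_x u|^2\ge \lambda_\Omega\int_\Omega |u|^2$. Since $\hat V(x,\cdot)$ vanishes on $S$ for a.e.\ $x$ (this is the condition $\tr_S(\Phi^*U)=0$, coming from $\tr U = u$ composed with $J$; more precisely, one applies the argument to $U - Ru\in dom(\A_2)$ and uses $\Delta_z Ru=0$ exactly as in the proof of Proposition~\ref{Prop8}), there is a Poincaré constant $\lambda_B>0$ on $B$ with $\int_B|\nabla_y \hat V(x,y)|^2\,dy\ge \lambda_B\int_B |\hat V(x,y)|^2\,dy$ for a.e.\ $x$. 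Combining, and using again the two-sided bound on $\sqrt{|g|}$ and $g^{ij}$ to pass between the weighted and unweighted micro-norms, one obtains
\begin{align*}
(-\A_q(u,U),(u,U))_{Y_g} \le -\lambda_\Omega\|u\|_{L_2(\Omega)}^2 - c\,\lambda_B \int_\Omega\int_B \sqrt{|g|}\,|\hat V(x,y)|^2\,dy\,dx
\end{align*}
for a suitable $c>0$ depending only on $C_1,C_2$. Setting $\sigma := \min\{\lambda_\Omega,\, c\,\lambda_B\}>0$ gives exactly the claimed inequality, since the right-hand side is then $\le -\sigma\,\|(u,U)\|_{Y_g}^2 = -\sigma\,((u,U),(u,U))_{Y_g}$.

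The main obstacle I anticipate is bookkeeping the Dirichlet condition for the micro variable correctly: the elements of $dom(\A_q)=dom(\A)$ satisfy $\tr U = u$, not $\tr U = 0$, so the naive integration by parts on each cell $\Omega_x$ produces a nonvanishing boundary term. The resolution is that this boundary term is precisely what generates the coupling term $q(U)$ in the definition of $\A_q$ (this is the content of the identity in the preceding lemma), so after the cancellation one is left effectively working with $U-Ru$, which does vanish on $\Gamma_x$, and $Ru$ contributes nothing to the micro Dirichlet form because $\Delta_z Ru=0$ together with $\nabla_y(Ru)$ being "harmonic lifting" data — care is needed to see that the quadratic form in $\hat V$ still dominates $\|\hat V - \text{(lift)}\|^2$ and hence, combined with the macro estimate, $\|\hat V\|^2$. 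Once that reduction is in place the rest is a routine application of Poincaré's inequality and the uniform ellipticity of $g$.
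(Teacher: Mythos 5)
Your expansion of the quadratic form and the cancellation of the $q(U)$ terms are correct and match the paper's computation; the issue is in the coercivity step.

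The inequality you write for the micro scale,
$$\int_B|\nabla_y \hat V(x,y)|^2\,dy\ge \lambda_B\int_B |\hat V(x,y)|^2\,dy \quad\text{for a.e. }x,$$
is \emph{false}: on $dom(\A_q)$ the matching condition is $\tr_S V(x,\cdot)=u(x)$, not $\tr_S V(x,\cdot)=0$, so $V(x,\cdot)$ does not lie in $W_2^{1,0}(B)$. Taking $V(x,\cdot)\equiv u(x)\neq 0$ constant gives $0$ on the left and a positive quantity on the right, so the stated bound cannot hold. Your appeal to ``the proof of Proposition~\ref{Prop8}'' is not to the point here: there the function $U-Ru$ is the resolvent unknown, whereas here $V$ itself appears in the Dirichlet form, and what Poincar\'e gives you is only $\int_B|\nabla_y V|^2\ge\lambda_B\int_B|V(x,\cdot)-u(x)|^2$ (since $Ru(x,\cdot)$ is the \emph{constant} $u(x)$ on $B$ — that is why it drops out of $\nabla_y$, not because of $\Delta_z Ru=0$). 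You do acknowledge this difficulty in your final paragraph, but you leave it unresolved, and your displayed chain of inequalities leading to $\sigma=\min\{\lambda_\Omega,c\lambda_B\}$ relies on the false estimate.

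The missing piece is that the two Poincar\'e estimates must be \emph{combined}, not applied independently: the micro Dirichlet form controls $\int_\Omega\int_B|V-u(x)|^2$, while the macro Dirichlet form controls $\|u\|^2_{L_2(\Omega)}$ and hence the size of the constant $u(x)\mathds{1}_B$ you subtracted. Concretely, $\|V(x,\cdot)\|^2_{L_2(B)}\le 2\|V(x,\cdot)-u(x)\|^2_{L_2(B)}+2|B|\,|u(x)|^2$, so integrating over $\Omega$ and inserting both Poincar\'e inequalities yields
$$\|u\|^2_{L_2(\Omega)}+\int_\Omega\int_B|V|^2\le \tfrac{1+2|B|}{\lambda_\Omega}\int_\Omega|\nabla_x u|^2+\tfrac{2}{\lambda_B}\int_\Omega\int_B|\nabla_y V|^2,$$
after which the uniform ellipticity of $g^{ij}\sqrt{|g|}$ and the boundedness of $\sqrt{|g|}$ give the claimed $\sigma>0$. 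This coupling is essential (without it, the constant-in-$B$ functions would defeat any lower bound on the micro form alone). Your overall strategy otherwise follows the same route the paper takes, which is equally terse about this point but does not assert a false inequality.
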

\begin{proof}
Let $(u,U)\in dom(\A_q)$. Set $V= \Phi^* U.$ We make use of equivalent norms in $W_2^1$, Sobolev embedding results and that the metric $g^{ij}$ is bounded from below. Let $C>0$ denote an appropriate constant. It holds
\begin{align*}
& (-\A_q(u,U), (u,U))_{Y_g}\\&= -\int_\Omega |\nabla_x u|^2 + \int_\Omega q(U) u -  \int_\Omega \int_B \sum_{i,j} g^{ij} \sqrt{|g|} (\partial_{y_i}V) (\partial_{y_j} V)\\
&\qquad + \int_\Omega \underbrace{\int_S \sum_{i,j} g^{ij} \sqrt{|g|} \partial_{y_i}V\cdot \nu_j}_{= -q(U)} \overbrace{V(x)}^{= u(x)}\\
&\quad\leq -C\left(\|u\|^2_{L_2(\Omega)} + \|U\|^2_{L_2(\Omega,L_2(\Omega_x))}\right)= -\sigma\left((u,U),(u,U)\right).
\end{align*}
Hence we have obtained a bound for the numerical range of $-\A_q$ in the weighted space.
\end{proof}
The spectrum of a self adjoint operator is contained in the closure of its numerical range (see \cite{Kato}, Section V, \textsection 3). Hence the spectrum of $-\A_q$ lies totally on the right hand side of $-\sigma$. Since the weighted norm and the usual norm on $Y$ are equivalent, we also get a spectral
bound for $-\A_q$ in the unweighted space. 
So the right half space is containt in the resolvent of $-\A_q$. We set
\begin{align*}
 Q: D(\A^{\frac12}) \to Y_0 : (u,U) \mapsto (-q(U),0).
\end{align*}
Then $Q\in \mathcal{L}(Y_{\frac12},Y_0)$. Obviously
$$-\A_q = -\A + Q$$
and the conditions of Proposition 2.4.1 in \cite{Luna} are satisfied. So $\A_q$ is sectorial. The matched microstructure problem is equivalent to
\begin{align}\label{MMPq}
\begin{cases}
 \partial_t(u,U) + \A_q (u,U) =0, \qquad t\in (0,T),\\
(u,U)(0)= (u_0,U_0).
\end{cases}
\end{align}
\begin{prop}\label{Prop_decay}
 Let $(u,U)$ be a solution of (\ref{MMPq}).\\ 
 Then $(u,U) \searrow (0,0)$ exponentially fast.
\end{prop}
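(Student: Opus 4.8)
The plan is to exploit the spectral and sectorial information about $\A_q$ that has just been assembled. By the two preceding lemmas, $\A_q$ is self-adjoint in the weighted Hilbert space $Y_g$ and its numerical range satisfies $(-\A_q(u,U),(u,U))_{Y_g}\le-\sigma\,\|(u,U)\|_{Y_g}^2$ for some $\sigma>0$. Since the spectrum of a self-adjoint operator is contained in the closure of its numerical range, $\mathrm{spec}(\A_q)\subset[\sigma,\infty)$; equivalently $\mathrm{spec}(-\A_q)\subset(-\infty,-\sigma]$. Combined with the observation $-\A_q=-\A+Q$ with $Q\in\mathcal L(Y_{1/2},Y_0)$ and Proposition~2.4.1 in \cite{Luna}, the operator $\A_q$ is sectorial, so $-\A_q$ generates an analytic semigroup $\{e^{-t\A_q}\}_{t\ge0}$ on $Y_0$.

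Next I would invoke the standard decay estimate for analytic semigroups whose generator has spectrum strictly in a left half-plane: since $\mathrm{spec}(-\A_q)$ lies in $\{\,\mathrm{Re}\,\lambda\le-\sigma\,\}$, for every $0<\sigma'<\sigma$ there is a constant $M\ge1$ with
\begin{align*}
 \|e^{-t\A_q}\|_{\mathcal L(Y_0)}\le M e^{-\sigma' t},\qquad t\ge0.
\end{align*}
In the self-adjoint setting on $Y_g$ one in fact gets the sharp bound $\|e^{-t\A_q}\|_{\mathcal L(Y_g)}\le e^{-\sigma t}$ directly from the spectral theorem, and then transfers it to $Y_0$ at the cost of the norm-equivalence constant between $Y_g$ and $Y_0$; either route suffices. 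Because \eqref{MMPq} is the linear homogeneous equation $\partial_t(u,U)+\A_q(u,U)=0$, its unique solution with datum $(u_0,U_0)\in X^\Theta\subset Y_0$ is exactly $(u,U)(t)=e^{-t\A_q}(u_0,U_0)$, and therefore
\begin{align*}
 \|(u(t),U(t))\|_{Y_0}\le M e^{-\sigma' t}\,\|(u_0,U_0)\|_{Y_0}\longrightarrow 0
\end{align*}
as $t\to\infty$, which is the claimed exponential decay $(u,U)\searrow(0,0)$. If one wants the decay in the stronger interpolation norm $X^\Theta$ rather than just in $Y_0$, the analytic-semigroup smoothing estimate $\|\A_q^\Theta e^{-t\A_q}\|_{\mathcal L(Y_0)}\le C t^{-\Theta}e^{-\sigma' t}$ together with $\|e^{-t\A_q}\|\le Me^{-\sigma' t}$ upgrades the statement at the cost of a $t^{-\Theta}$ factor near $t=0$, which is harmless for the large-time behaviour.

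The only genuinely delicate point is bookkeeping rather than analysis: one must be sure that the spectral bound obtained in the weighted space $Y_g$ really does yield a uniform exponential bound on the semigroup acting on the original space $Y_0$, and that the $\sigma$ from the numerical-range lemma is the relevant exponential rate. This is where the equivalence of $\|\cdot\|_{Y_g}$ and $\|\cdot\|_{Y_0}$ (already noted in the text) is used; the equivalence constants enter the prefactor $M$ but not the exponent, so the rate is preserved. A secondary point is to confirm that $Q$ being only $\mathcal L(Y_{1/2},Y_0)$ (a lower-order perturbation) does not spoil the half-plane spectral bound — but this is exactly what the self-adjointness argument for $\A_q$ in the previous lemma already guarantees, so no extra perturbation theory is needed for the spectrum itself; Proposition~2.4.1 in \cite{Luna} is invoked only to re-derive sectoriality in the unweighted space. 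I would therefore expect the write-up to be short, essentially: sectoriality $+$ spectral bound $\Rightarrow$ exponential semigroup estimate $\Rightarrow$ decay of the solution of \eqref{MMPq}.
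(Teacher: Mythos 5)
Your argument is correct and matches the paper's approach: the paper's proof is the one-line observation that for analytic semigroups the growth bound equals the spectral bound (citing Engel--Nagel, Corollary~3.12), combined with the spectral bound $\mathrm{spec}(-\A_q)\subset(-\infty,-\sigma]$ established in the preceding lemmas. Your additional remarks on the $Y_g$/$Y_0$ norm transfer and the spectral-theorem variant are consistent with, and just spell out, what the paper already notes about norm equivalence.
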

\begin{proof}
This follows from the the fact that for analytic semigroups the growth bound and the spectral bound coincide as it is e.g. shown in \cite{EngNa} Corollary 3.12. 
\end{proof}
Proposition \ref{Prop_decay} allows also to apply the principle of linearized stability to the semilinear version of (MM), provided $f$ is of class $C^1$, cf. \cite{Luna}.
We can also treat a modified model with no-flux or Neumann boundary conditions on $\Omega$. Thus we want 
$\partial_\nu u = 0  \text{ on }\Gamma.$
We set
\begin{align*}
 dom(A_1^N) &= \left\{u\in W_p^2(\Omega); \partial_\nu u =0\text{ on }\Gamma\right\},\\
 A_1^N u &= -\Delta_x u, &&\text{for all } u\in dom(A_1^N).
\end{align*}
The boundary conditions in the cells are not changed. Hornung and J\"ager  also derived this model in \cite{HorJa}.
Then $-A_1^N$ is the generator of a strongly continuous, analytic semigroup in $L_p(\Omega)$ for $1<p<\infty$.
We set
\begin{align*}
 dom(\textbf{A}^N) &= \bigcup_{u\in dom(A_1^N)} \{u\} \times D_0(u),\\
 \textbf{A}^N (u,U) &= (A_1^N u, [\Phi_* \mathcal{A}_x \hat{V}(x)]), && \text{for } (u,U) \in dom (\textbf{A}^N), V= \Phi^* U. 
\end{align*}
The modified model can be formulated as evolution equation
\begin{align}\label{MMPn}
 \begin{cases}
 \partial_t(u,U) + \A^N (u,U) =(q(U),0),\qquad t\in (0,T),\\
(u,U)(0)= (u_0,U_0).
\end{cases}
\end{align}
The changes in the operator occur only on the macroscales. Thus they can be treated with well known results for elliptic operators on bounded domains. So the same considerations as for $\textbf{A}$ can be done for $\A^N$. Existence and uniqueness can be proved similarly as in Chapter \ref{3.3}. Nevertheless the qualitative behaviour is different.
\begin{prop}
 Let $(u_0,U_0)\in W_p^1(\Omega)\times L_p(\Omega, W_p^1(\Omega_x))$ and let $(u,U)$ be the solution to the matched microstructure problem with Neumann boundary conditions (\ref{MMPn}) on some time interval $[0,T]$. Then the material value
$$ S(u,U) := \int_\Omega u + \int_\Omega \int_B \sqrt{|g|} \Phi^* U, \qquad t\in (0,T], $$
is preserved.
\end{prop}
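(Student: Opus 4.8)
The plan is to differentiate $S(u,U)$ in time along a solution and show the derivative vanishes. First I would use the evolution equation \eqref{MMPn}, which componentwise reads $\partial_t u = \Delta_x u + q(U)$ on $\Omega$ (with the Neumann condition $\partial_\nu u = 0$ on $\Gamma$) and $\partial_t U = \Delta_z U$ in each cell (with the matching condition $\tr U = u$). Computing formally,
\begin{align*}
\frac{d}{dt} S(u,U) = \int_\Omega \partial_t u + \int_\Omega \int_B \sqrt{|g|}\, \Phi^*(\partial_t U).
\end{align*}
The first term equals $\int_\Omega \Delta_x u + \int_\Omega q(U)$, and by the divergence theorem together with $\partial_\nu u = 0$ on $\Gamma$ the term $\int_\Omega \Delta_x u$ vanishes, leaving $\int_\Omega q(U)$.

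For the second term I would work in the transformed picture on $\Omega\times B$, where $V := \Phi^* U$ solves $\partial_t V = \mathcal{A}_x^{-1}$-type equation — more precisely $\sqrt{|g|}\,\partial_t V = \sum_{i,j}\partial_{y_i}(\sqrt{|g|}\,g^{ij}\partial_{y_j}V)$ in $B$ for a.e. $x$, with $\tr_S V = u(x)$. Then
\begin{align*}
\int_\Omega\int_B \sqrt{|g|}\,\partial_t V = \int_\Omega \int_B \sum_{i,j} \partial_{y_i}\!\left(\sqrt{|g|}\,g^{ij}\partial_{y_j}V\right) dy\,dx = \int_\Omega \int_S \sqrt{|g|}\,g^{ij}\partial_{y_j}V\cdot \nu_i \, ds\, dx,
\end{align*}
using the divergence theorem in the $y$-variable on $B$. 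By the definition of $q$, the inner boundary integral is exactly $-q(U)(x)$. Hence the second term equals $-\int_\Omega q(U)$, which cancels the first term, giving $\frac{d}{dt}S(u,U) = 0$ and therefore $S(u,U)$ is constant in $t$.

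The main obstacle is justifying these manipulations at the stated regularity: the hypothesis only gives $(u_0,U_0)\in W_p^1(\Omega)\times L_p(\Omega, W_p^1(\Omega_x))$, which is the trace space $(Y_0, D(\A^N))_{1-1/p,p}$, so the solution lies in that interpolation space for positive times but the Laplacians $\Delta_x u$ and $\Delta_z U$ are only distributions a priori. I would handle this by first proving the identity for smooth data (or data in $D(\A^N)$), where all integrations by parts are legitimate and $q(U)$ is a genuine $L_p$ function, and then passing to the limit: $S$ is continuous on $W_p^1(\Omega)\times L_p(\Omega,W_p^1(\Omega_x))$ (the map $U\mapsto \int_\Omega\int_B\sqrt{|g|}\,\Phi^*U$ is a bounded linear functional by the estimate $\sqrt{|g|}\in L_\infty$), and by the analytic semigroup the solution depends continuously on initial data in this space, uniformly on compact time intervals. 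A density argument then extends conservation of $S$ to all admissible data. One subtlety worth a remark is that the exchange term $q(U)$ must be interpreted via the boundary-flux formula as in \eqref{qU}, and the equality of the two expressions there — used implicitly when we replace $\int_\Omega\int_S(\cdots)$ by $-\int_\Omega q(U)$ — is precisely what makes the macroscopic source cancel the net efflux from the cells.
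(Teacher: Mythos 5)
Your proof is correct and fills in exactly the ``straight forward calculation'' the paper invokes: differentiate $S$ in $t$, kill $\int_\Omega\Delta_x u$ via the divergence theorem and the Neumann condition, transform the cell equation to $\sqrt{|g|}\,\partial_t V=\sum_{i,j}\partial_{y_i}(\sqrt{|g|}\,g^{ij}\partial_{y_j}V)$ and integrate by parts on $B$ so the boundary flux produces $-\int_\Omega q(U)$, which cancels the $\int_\Omega q(U)$ coming from the macro equation. The closing density argument is sound but not really needed: since $-\A^N$ generates an analytic semigroup, the solution lies in $D(\A^N)$ (indeed is smooth) for every $t>0$, so the integrations by parts are already legitimate pointwise on $(0,T]$, which is exactly the interval for which conservation is asserted.
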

\begin{proof} This follows from a straight forward calculation
\end{proof}
\section{Nonlinear Boundary Conditions}\label{Gravity}
The following version of the MMP brings gravity into the scheme. Let us restrict ourselves to uniform cells. Assume we are given a function $f : (0,2\pi) \to (0,\infty)$ periodic and differentiable. We consider the fixed domain 
$$\Omega_f= \left\{(x,y) \in \mathbb{S }^1\times \R; 0<y<f(x)\right\}.$$ 
It is shown in Figure \ref{Omega_f}. 
\begin{figure}[htbp]
 \begin{center}
 \includegraphics[width=0.5\textwidth]{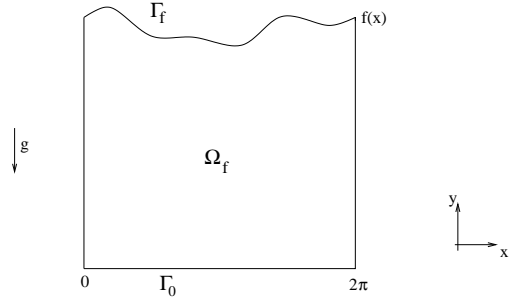}
 \caption{The periodic domain $\Omega_f$}\label{Omega_f}
\end{center}
\end{figure}
The gravitational force points into the $-y$ direction. The almost cylindrical domain $\Omega_f$ can be treated with the same methods as before. For a work on the torus see \cite{EsPro}. Let $h: \Omega_f\times (0,T) \to \R$ describe the sources and sinks in the macro system. A solution of the matched microstructure model with gravity is a pair of functions $(u,U)$ that satisfies
\begin{align*}
 (\text{P})\quad \left\{ \begin{array}{r cl l}
\partial_t u -\Delta_x u &=&  h + q(U), & \text{on } \Omega_f, t\in (0,T),\\
\partial_2 u &=&  -u^2, & \text{on } \Gamma_0, t\in(0,T),\\
u&=& \rho_0,& \text{on } \Gamma_f, t\in (0,T),\\
\partial_t U - \Delta_y U &=&  0,& \text{in } \Omega_f\times B, t\in (0,T),\\
U&=&u, & \text{on }\Omega_f\times S, t\in (0,T),\\
(u,U)(0) &=& (u_0,U_0), &\text{on }\Omega_f \times (\Omega_f\times B).
             \end{array}\right.
\end{align*}
Let $v= u - \rho_0 \cdot \mathds{1}_{\Omega_f}$, $ V= U - \rho_0 \mathds{1}_{\Omega_f\times B}$. By definition (\ref{qU}) it holds that $q(U) =q(V+\rho_0) = q(V)$. So $(v,V)$ solves
\begin{align*}
 (\text{P'})\quad \left\{ \begin{array}{r cl l}
\partial_t v -\Delta_x v &=&  h + q(V), & \text{on } \Omega_f, t\in (0,T),\\
\partial_2 v &=&  -(v+\rho_0)^2, & \text{on } \Gamma_0, t\in(0,T),\\
v&=& 0,& \text{on } \Gamma_f, t\in (0,T),\\
\partial_t V - \Delta_y V &=&  0,& \text{in } \Omega_f\times B, t\in (0,T),\\
V&=&v, & \text{on }\Omega_f\times S, t\in (0,T),\\
(v,V)(0) &=& (u_0-\rho_0,U_0-\rho_0), &\text{on }\Omega_f \times (\Omega_f\times B).
             \end{array}\right.
\end{align*}
To treat the nonlinear boundary condition in the macroscopic scale we us a weak formulation. As before we define operators $A_1$ and $\A_q$ with linear zero boundary conditions ($\partial_2 v = 0$ on $\Gamma_0$, $v=0$ on $\Gamma_f$). The mixed Dirichlet-Neumann conditions on $\Omega_f$ do not effect the properties of the operators. Especially $\A_q$ is selfadjoint and $-\A_q$ is the generator of an analytic semigroup in the Hilbert space 
$$Y_0= L_2(\Omega_f) \times L_2(\Omega_f\times B).$$
The operator $A_1$ is a well known form of the Laplace operator and so it is invertible. 
 The presented method is due to Amann \cite{Amann88} and Escher \cite{Escher89}. The main idea is to move the nonlinearity from the boundary to the right hand side $h$. Therefore we need to construct an appropriate inverse operator from the trace space on $\Gamma_0$ to the domain $\Omega_f$. The resulting semilinear evolution equation can be treated as before. 
For $u\in L_2(\Omega_f)$ we set
$$D_0^s(u) = \left\{ U\in L_2(\Omega_f, W_2^{2s}(B)), \tr_S U = u\right\}, \qquad \frac14<s<\infty.$$
In the rest of the chapter we drop the index $f$ of $\Omega_f$. Let $\tr_0, \tr_f$ denote the trace operators onto $\Gamma_0$ and $\Gamma_f$. With the boundary operator on $\Omega$ we mean an operator $\mathcal{B}$, with
$\mathcal{B} u = \tr_0 \partial_\nu u + \tr_f u.$ 
We define
\begin{align}
 Y_s = \begin{cases}
        \left\{ (u,U), u\in W_2^{2s}(\Omega), U\in D_0^s(u), \mathcal{B}u=0 \right\}, & \text{for } \frac32 < 2s\leq \infty,\\
 	\left\{ (u,U), u\in W_2^{2s}(\Omega), U\in D_0^s(u), \tr_f u=0\right\}, & \text{for } \frac12 < 2s \leq \frac32,\\
	W_2^{2s}(\Omega)\times L_2(\Omega, W_2^{2s}(B)), & \text{for } 0\leq2s\leq \frac12.
       \end{cases}\label{spaces1}
\end{align}
To construct a suitable retract, we first restrict ourselves to the macro scale.
Let $\mathcal{A}_1= - \Delta_x $. Considered as an unbounded operator in $L_2(\Omega)$ it is closable. Together with $\mathcal{B}$ it fits into the scheme of \cite{Escher89}, Chapter 3. We will use the same notation. Let $\overline{\mathcal{A}}_1$ be the closure of $\mathcal{A}_1$. Then $W_2^2(\Omega)\stackrel{\mathrm{d}}\hookrightarrow D(\overline{\mathcal{A}}_1)$. In addition we set
$\mathcal{C} u = \tr_0 u+ \tr_f \partial_\nu u,$
and
\begin{align*}
 \partial W_2^{2s} = W_2^{2s-\frac32}(\Gamma_0) \times W_2^{2s-\frac12}(\Gamma_f),&&\partial_1 W_1^{2s} = W_2^{2s-\frac12}(\Gamma_0) \times W_2^{2s-\frac32}(\Gamma_f),
\end{align*}
for $0\leq s\leq 1$. Combined together, the map $(\mathcal{B},\mathcal{C})\in \L(W_2^2(\Omega), \partial W_2^2\times \partial_1 W^2_2)$ is a retraction. So we can apply Theorem 4.1. from \cite{Amann88}:
\begin{prop}
 There exists a unique extension $\left(\overline{\mathcal{B}}, \overline{\mathcal{C}}\right)\in \L(D\left(\overline{\mathcal{A}}_1\right), \partial W_2^0\times \partial_1 W^0_2)$ of $(\mathcal{B},\mathcal{C})$ such that for $u\in D(\overline{\mathcal{A}}_1)$, $v\in W_2^2(\Omega)$ 
 the generalized Green's formula
$$\langle v, \overline{\mathcal{A}}_1 u \rangle_{Y_0} + \langle \mathcal{C}v, \overline{\mathcal{B}}u\rangle_{\partial W_2^0} = \langle \mathcal{A}_1 v, u\rangle_{Y_0} + \langle \mathcal{B}v,  \overline{\mathcal{C}}u\rangle_{\partial_1 W_2^0}$$ is valid.
\end{prop}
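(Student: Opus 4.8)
The plan is to realise the pair $(\mathcal{B},\mathcal{C})$ as a retraction on the Sobolev scale over $\Omega$ and then to extend it to the domain of the closed operator $\overline{\mathcal{A}}_1$ by continuity, using a generalised Green's formula as the defining identity. Concretely, I would follow the scheme of Amann \cite{Amann88}, Theorem 4.1, verbatim. The ingredients required there are: (i) that $\mathcal{A}_1 = -\Delta_x$ is closable in $L_2(\Omega)$ with $W_2^2(\Omega)\stackrel{\mathrm d}\hookrightarrow D(\overline{\mathcal{A}}_1)$, which has already been recorded just above the statement; (ii) that $(\mathcal{B},\mathcal{C})$ is a retraction from $W_2^2(\Omega)$ onto $\partial W_2^2\times\partial_1 W_2^2$, i.e. it admits a continuous right inverse (a coretraction), which has likewise been noted; and (iii) the classical Green's formula for smooth functions, $\int_\Omega v(-\Delta_x u) = \int_\Omega(-\Delta_x v)u + \int_{\Gamma}(v\,\partial_\nu u - u\,\partial_\nu v)$, decomposed according to the splitting $\Gamma = \Gamma_0\cup\Gamma_f$ so that the boundary term becomes $\langle\mathcal{C}v,\mathcal{B}u\rangle_{\partial W_2^0} - \langle\mathcal{B}v,\mathcal{C}u\rangle_{\partial_1 W_2^0}$. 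The pairings here are the natural $L_2$-duality pairings on the boundary pieces; the bookkeeping of half-integer orders is exactly the pairing of $W_2^{2s-3/2}(\Gamma_0)$ against $W_2^{-(2s-3/2)}(\Gamma_0)$ etc., and for $s=1$ and $s=0$ these are the duals appearing in $\partial W_2^2$, $\partial_1 W_2^2$ and $\partial W_2^0$, $\partial_1 W_2^0$ respectively.

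The construction of the extension then goes as follows. Fix $u\in D(\overline{\mathcal{A}}_1)$ and choose $u_k\in W_2^2(\Omega)$ with $u_k\to u$ in $D(\overline{\mathcal{A}}_1)$, i.e. $u_k\to u$ in $L_2(\Omega)$ and $-\Delta_x u_k = \mathcal{A}_1 u_k \to \overline{\mathcal{A}}_1 u$ in $L_2(\Omega)$. For any fixed $v\in W_2^2(\Omega)$ the classical Green's formula gives
\begin{align*}
\langle\mathcal{C}v,\mathcal{B}u_k\rangle_{\partial W_2^0} - \langle\mathcal{B}v,\mathcal{C}u_k\rangle_{\partial_1 W_2^0} = \langle\mathcal{A}_1 v, u_k\rangle_{Y_0} - \langle v,\mathcal{A}_1 u_k\rangle_{Y_0},
\end{align*}
and the right-hand side converges to $\langle\mathcal{A}_1 v, u\rangle_{Y_0} - \langle v,\overline{\mathcal{A}}_1 u\rangle_{Y_0}$. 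Hence the left-hand side converges, and since the coretraction of $(\mathcal{B},\mathcal{C})$ lets one choose $v$ to realise an arbitrary element of $\partial W_2^0\times\partial_1 W_2^0$ as $(\mathcal{C}v,\mathcal{B}v)$ (with norm control), the functionals $(\mathcal{B}u_k,\mathcal{C}u_k)$ form a Cauchy sequence in $\partial W_2^0\times\partial_1 W_2^0$ whose limit is independent of the approximating sequence; call it $(\overline{\mathcal{B}}u,\overline{\mathcal{C}}u)$. The map $u\mapsto(\overline{\mathcal{B}}u,\overline{\mathcal{C}}u)$ is linear and, by the uniform bound just obtained, continuous from $D(\overline{\mathcal{A}}_1)$ to $\partial W_2^0\times\partial_1 W_2^0$; it restricts to $(\mathcal{B},\mathcal{C})$ on $W_2^2(\Omega)$ by construction, and the Green's formula in the statement holds by passing to the limit in the displayed identity above. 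Uniqueness of the extension is immediate from the density $W_2^2(\Omega)\stackrel{\mathrm d}\hookrightarrow D(\overline{\mathcal{A}}_1)$ together with continuity.

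The only point requiring genuine care — and the step I expect to be the main obstacle — is verifying that $(\mathcal{B},\mathcal{C})$ together with $\mathcal{A}_1$ really fits the abstract framework of \cite{Escher89}, Chapter 3, and \cite{Amann88}, Theorem 4.1: that is, checking that $(\mathcal{B},\mathcal{C})$ is a \emph{retraction} onto $\partial W_2^2\times\partial_1 W_2^2$ and not merely onto a proper closed subspace, and that the mixed Dirichlet--Neumann character (Dirichlet on $\Gamma_f$, Neumann on $\Gamma_0$) is a \emph{normal} boundary system in the sense of Lopatinskii--Shapiro so that the relevant trace maps are surjective with continuous right inverses on the full scale $0\le s\le 1$. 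For the smooth periodic domain $\Omega_f$ this is standard elliptic boundary-value theory — the Dirichlet and Neumann traces on complementary components of a smooth boundary are independent and jointly surjective — but it is exactly where the geometry of $\Omega_f$ enters. Once that is in hand, the extension theorem is a direct citation of \cite{Amann88}, Theorem 4.1, applied to the closable operator $\mathcal{A}_1$ and the boundary retraction $(\mathcal{B},\mathcal{C})$, and the generalised Green's formula is its conclusion.
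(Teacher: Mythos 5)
Your proposal matches the paper's treatment: the paper records that $(\mathcal{B},\mathcal{C})$ is a retraction from $W_2^2(\Omega)$ onto $\partial W_2^2\times\partial_1 W_2^2$ and then states the proposition as a direct citation of Theorem~4.1 of Amann \cite{Amann88}, which is exactly what you do, and your unpacking of the density and continuity argument underlying that theorem is fine. One minor bookkeeping correction: the coretraction lets you realise arbitrary elements of the \emph{predual} $\partial_1 W_2^2\times\partial W_2^2$ as $(\mathcal{C}v,\mathcal{B}v)$, not of $\partial W_2^0\times\partial_1 W_2^0$ as you wrote; it is the resulting bounded linear functionals $(\overline{\mathcal{B}}u,\overline{\mathcal{C}}u)$ that land in $\partial W_2^0\times\partial_1 W_2^0$ by duality.
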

Then from interpolation theory (Proposition I 2.3.2 in \cite{Amann}) and well known a priori estimates for $\mathcal{A}_1$, it follows that
 $$\left(\overline{\mathcal{A}}_1, \overline{\mathcal{B}}\right) \in Isom((D(\overline{\mathcal{A}}_1),W_2^2(\Omega))_\theta, L_2(\Omega)\times \partial W_2^{2\theta}), \qquad \theta\in [0,1].$$
Therefore we can define the right inverse 
$$R_\theta = \left(\overline{\mathcal{A}}_1, \overline{\mathcal{B}}\right)^{-1}| \{0\} \times \partial W_2^{2\theta}, \qquad \theta \in [0,1].$$
Then $R_\theta \in \L(\partial W_2^{2\theta}, W_2^{2\theta}(\Omega))$. We now add the microscopic scale. 
With $\Y^s$ we mean
\begin{align*}
 \Y^s = \begin{cases}
         \{(u,U)\in W_2^{2s}(\Omega)\times L_2(\Omega, W_2^{2s}(B)); U\in D_0^{2s}(u)\}, & \frac12 < 2s\leq 2,\\
	W_2^{2s}(\Omega)\times L_2(\Omega, W_2^{2s}(B)), & 0\leq  2s \leq \frac12.
        \end{cases}
\end{align*}
So if $s< \frac34$ the two sets $Y_s$ and $ \Y^s$ coincide.
Define $\textbf{R}_\theta \in \L(\partial W_2^{2\theta},\Y^{2\theta})$ by 	 
$$\textbf{R}_\theta u= (R_\theta u, R_\theta u \cdot \mathds{1}_B), \qquad u\in \partial W_2^{2\theta}.$$
We set
$$\partial_0 W_2^{2\theta} = \{u\in \partial W_2^{2\theta}; \tr_f u =0\}.$$
Obviously it is a closed linear subspace of $\partial W_2^{2\theta}$. It can be identified with the space $W_2^{2\theta-\frac32}(\Gamma_0)$. So it holds
$$\textbf{R}_\theta( \partial_0 W_2^{2\theta}) \subset Y_{2\theta},\qquad \text{if }2\theta \leq \frac32.$$
For the formulation of the abstract evolution problem we use the scale of interpolation and extrapolation spaces $\{(Y_\alpha, (\A_q)_\alpha ), \alpha \in \R\}$ as it was defined by Amann \cite{Amann88}. For $0\leq\alpha \leq 1$ this corresponds to the interpolation spaces in Section \ref{3.3} and Definition (\ref{spaces1}). 
 We set as in \cite{Escher89}
\begin{align*}
 \mathbb{A} = (\A_q)_{-\frac12}, \qquad H = Y_{-\frac12}, \qquad    D=Y_{\frac12} = D(\mathbb{A}).
\end{align*}
Then the duality theory tells us that $D= H'$ and the duality pairings satisfy
$$\langle \vec{u},\vec{v}\rangle_H = \langle \vec{u},\vec{v} \rangle_{Y_0} = (\vec{u}|\vec{v})_{Y_0}, \quad \text{ for } \vec{u}\in D, \vec{v}\in Y_0.$$
Let  $a: D\times D \to \R$ be the coercive bilinear form
$$a(\vec{u},\vec{v}) = \int_\Omega \nabla_x u \cdot \nabla_x v \, dx + \int_{\Omega\times B} \nabla_z U \cdot \nabla_z V \, d(x,z), \quad \vec{u}, \vec{v} \in D.$$
Usually it is clear from the context whether we refer to the function $u$ living on $\Omega$ or the pair $\vec{u}=(u,U)$. For $\vec{u},\vec{v}\in D$ we get
\begin{align*}
  \langle \vec{v}, \mathbb{A} \vec{u}\rangle_H 
&= \int_\Omega \nabla_x v \cdot \nabla_x u - \int_\Omega  v q(U) + \int_{\Omega\times B } \nabla_z V \cdot \nabla_z U   - \int_{\Omega\times S} V \nabla_z U \cdot \nu\\ 
& \qquad- \int_{\Gamma_0} v \partial_\nu u - \int_{\Gamma_f} v \nabla_x u \cdot \nu = a(\vec{u},\vec{v}). 
\end{align*}
The possible approximation of $u$ by functions in $Y_1$ and the continuity of the left and right hand side justify this formal calculation. 
To treat the nonlinear boundary condition we define the map $G: D \to L_2(\Gamma_0)$,
$$G(u) = -\tr_0(u+\rho_0)^2.$$
We have to show that $(h,G)$ satisfies the assumption (3.6) of \cite{Escher89}. For $h$ we just assume that $h\in Y_0$. For $G$ the properties are summarized in the following lemma.
\begin{lem} \label{Lemma22}
 $$G\in C^1(D, W_2^{2\beta+\frac12}(\Gamma_0))$$ 
for any fixed $\beta \in (-\frac12,-\frac14)$,
and the Lipschitz continuity is uniform on bounded sets. 
\end{lem}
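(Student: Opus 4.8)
The plan is to exploit the polynomial (quadratic) structure of $G$ together with the one--dimensionality of $\Gamma_0$. Writing $\rho_0$ for the (constant) reference density and using that the trace commutes with pointwise products, expand
\[
 G(u) \;=\; -\tr_0(u+\rho_0)^2 \;=\; -\bigl(\tr_0 u\bigr)^2 \;-\; 2\rho_0\,\tr_0 u \;-\; \rho_0^2 .
\]
The last summand is a constant, the middle one is a bounded linear map, so the whole analysis reduces to the homogeneous quadratic map $N(u):=\bigl(\tr_0 u\bigr)^2$: it suffices to show $N\in C^1\bigl(D,\,W_2^{2\beta+\frac12}(\Gamma_0)\bigr)$ with derivative Lipschitz on bounded sets, and then add back the trivially smooth affine part.

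For the mapping properties I would chain the following maps. First, the first component of $D=Y_{1/2}$ lies in $W_2^1(\Omega)$ (the index $2s=1$ falls in the middle regime of the definition \eqref{spaces1} of $Y_s$, with $\|u\|_{W_2^1(\Omega)}\lesssim\|(u,U)\|_D$), so the trace theorem gives $\tr_0\in\L\bigl(D,\,W_2^{1/2}(\Gamma_0)\bigr)$. Second, since $\Gamma_0$ is a compact one--dimensional manifold, the (borderline) Sobolev embedding yields $W_2^{1/2}(\Gamma_0)\hookrightarrow L_q(\Gamma_0)$ for every $q<\infty$; in particular $W_2^{1/2}(\Gamma_0)\hookrightarrow L_4(\Gamma_0)$. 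Third, the squaring map $v\mapsto v^2$ sends $L_4(\Gamma_0)$ boundedly into $L_2(\Gamma_0)$; being a homogeneous polynomial of a bounded bilinear map it is of class $C^\infty$, with $\|v^2-w^2\|_{L_2}\le\|v-w\|_{L_4}\|v+w\|_{L_4}$, derivative $h\mapsto 2vh$, and $\|2vh\|_{L_2}\le 2\|v\|_{L_4}\|h\|_{L_4}$. Finally, because $\beta\in(-\tfrac12,-\tfrac14)$ forces $2\beta+\tfrac12<0$, we have the cost--free embedding $L_2(\Gamma_0)=W_2^{0}(\Gamma_0)\hookrightarrow W_2^{2\beta+\frac12}(\Gamma_0)$. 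Composing, $N$ factors through these maps; hence $N\in C^\infty\bigl(D,\,W_2^{2\beta+\frac12}(\Gamma_0)\bigr)$ with $DN(u)h=2\,\tr_0 u\cdot\tr_0 h$ (read through the embeddings) and
\[
 \bigl\|DN(u)-DN(\bar u)\bigr\|_{\L\left(D,\,W_2^{2\beta+\frac12}(\Gamma_0)\right)} \;\le\; C\,\|u-\bar u\|_D ,
\]
so the derivative is in fact globally Lipschitz, a fortiori uniformly Lipschitz on bounded sets. Adding $-2\rho_0\tr_0 u-\rho_0^2$, whose values lie in $W_2^{1/2}(\Gamma_0)\hookrightarrow W_2^{2\beta+\frac12}(\Gamma_0)$ and which is affine hence smooth, yields the assertion for $G$.

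The points that require attention are: (i) the trace identity $\tr_0(uv)=(\tr_0 u)(\tr_0 v)$, which follows from approximating $u$ by smooth functions in $W_2^1(\Omega)$ and passing to the limit in the continuous maps above; (ii) the use of $\dim\Gamma_0=1$ — this is exactly what makes $W_2^{1/2}(\Gamma_0)$ embed into every $L_q(\Gamma_0)$, $q<\infty$, and hence lets the quadratic nonlinearity land in $L_2(\Gamma_0)$; and (iii) tracking that the target exponent $2\beta+\tfrac12$ is negative, so no loss occurs in the last embedding. I expect (ii) to be the conceptual heart of the matter: in higher spatial dimension the analogous statement fails and one would have to lower the regularity index of the target trace space accordingly. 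Beyond this, the $C^1$ and uniform--Lipschitz claims are automatic consequences of the bilinear (polynomial) nature of $N$, so no genuine analytic difficulty remains; the rest is bookkeeping with standard trace and embedding theorems for the smooth domain $\Omega=\Omega_f$.
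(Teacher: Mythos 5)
Your proof is correct and takes a genuinely different route from the paper's. You trace first and then square on the one--dimensional boundary, exploiting the borderline Sobolev embedding $W_2^{1/2}(\Gamma_0)\hookrightarrow L_q(\Gamma_0)$ for every finite $q$; the paper instead squares first on the two--dimensional domain $\Omega_f$ via Amann's multiplication theorem $W_2^1\cdot W_2^1\to W_2^{1-\epsilon}(\Omega)$ and then traces into $W_2^{\frac12-\epsilon}(\Gamma_0)$. The two decompositions agree since the trace commutes with pointwise products, as you note under point (i). What your route buys: you avoid the multiplication theorem altogether, you obtain the stronger conclusion that the map is $C^\infty$ with \emph{globally} Lipschitz derivative (hence locally Lipschitz $G$ for free), and you bypass a delicate step in the paper's Lipschitz argument, which invokes $W_2^1(\Omega)\hookrightarrow C(\overline{\Omega})$ to bound $\|u\|_\infty$ on bounded sets of $D$ --- on a two--dimensional $\Omega_f$ this is precisely the borderline case in which the Sobolev embedding into $L_\infty$ fails, so that step needs repair (e.g.\ exactly the $L_4(\Gamma_0)$ chain you use). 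Your H\"older estimate $\|v^2-w^2\|_{L_2}\le\|v-w\|_{L_4}\|v+w\|_{L_4}$ delivers the uniform Lipschitz bound on bounded sets without any $L_\infty$ control. The only point requiring a small supplementary argument is the trace identity $\tr_0(uv)=(\tr_0 u)(\tr_0 v)$ on $W_2^1(\Omega)$, which you correctly identify and which follows by density of smooth functions and continuity of both sides.
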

\begin{proof}
  Fix $\beta\in \left(-\frac12, -\frac14\right)$.
Let $\vec{u}\in D$. Then also $u+ \rho_0 \in W_2^1(\Omega)$. From \cite{Amann_Multi}, Theorem 4.1 and the fact that the Besov space $B_{22}^s(\Omega) = W_2^s(\Omega)$, we know that the multiplication
$ W_2^1( \Omega) \cdot W_2^1(\Omega) \to W_2^{1-\epsilon}(\Omega)$
is continuous for $0<\epsilon < 1$. We conclude that for fixed $\epsilon < \frac12$ we have
$$(u+\rho_0)^2 \in W_2^{1-\epsilon}(\Omega) \qquad \text{and} \qquad \tr_0(u+\rho_0)^2 \in W_2^{\frac12-\epsilon}(\Gamma_0) .$$ 
Then by Sobolev embedding it holds $ W_2^{\frac12-\epsilon}(\Gamma_0) \stackrel{\mathrm{d}}\hookrightarrow L_2(\Gamma_0) \stackrel{\mathrm{d}}\hookrightarrow W_2^{2\beta+ \frac12}(\Gamma_0).$
The second inclusion follows from the definition of $W_2^{-s}$ as a dual spaces for $s>0$.
So finally
$$-\tr_0 (u+\rho_0)^2 \in W_2^{2\beta +\frac12}(\Gamma_0).$$
The Fr\'echet derivative of $G$ is the linear operator
$\partial G(u)v = -2\tr_0 (u+\rho_0) v.$
Thus $G\in C^1(D, W_2^{2\beta+\frac12}(\Gamma_0))$. It remains to show that the map is uniformly Lipschitz continuous on bounded sets. Let $W\subset D$ be bounded. Take $\vec{u},\vec{v}\in W$. Then
\begin{align*}
\tr_0 (u+\rho_0)^2 -\tr_0 (v+\rho_0)^2 = \tr_0 u^2 -\tr_0 v^2 + 2\rho_0 (\tr_0 u-\tr_0 v).
\end{align*}
Clearly the last term is uniformly Lipschitz on $W$.
Further $W_2^1(\Omega) \hookrightarrow C(\overline{\Omega})$. So a bounded set in $D$ is bounded in $C(\overline{\Omega})$. Thus there exists a constant $c_1>0$, such that $\|u\|_\infty \leq c_1$ for all $\vec{u}\in W$. It follows from this and Sobolev embeddings that
\begin{align*}
 \| \tr_0 u^2 -\tr_0 v^2\|_{W_2^{2\beta+\frac12} (\Gamma_0)}& \leq C \|\tr_0(u^2-v^2)\|_{L_2(\Gamma_0)}\\
& \leq L \|u-v\|_{W_2^1(\Omega)}. 
\end{align*}
The last constant $L$ is independent of $\vec{u},\vec{v}\in W$. This completes the proof.
\end{proof}
Now we define the right hand side to write (P') as an abstract evolution equation. Let $\textbf{R} :=\textbf{R}_{\frac12}$.
Then it holds for $u \in W_2^{\frac12}(\Gamma_0)$ and $ \vec{v}\in D$
\begin{align}
 \langle \vec{v}, \mathbb{A} \textbf{R} u\rangle_H = \langle v, u\rangle_{W_2^{\frac12}(\Gamma_0)}=: \langle v,u\rangle_{\Gamma_0},\label{Rand}
\end{align} 
in the sense of trace. We set
\begin{align*}
 F(\vec{u}) = (h,0)+ \mathbb{A} \textbf{R} G(\vec{u}), \qquad \vec{u}\in D.
\end{align*}
Note that the second component of $F(\vec{u})$ vanishes since $R_{\frac12}u\cdot \mathds{1}_B$ is constant on each cell. By assumption $h\in Y_0 \hookrightarrow Y_\beta$. It was shown in \cite{Escher89}, p.301, that under this circumstances $F$ is well defined and the previous lemma ensures that 
$$F\in C^1(D, Y_\beta) \text{ is uniformly Lipschitz continuous on bounded sets.}$$
\begin{prop}\label{Prop23}
 For each $\vec{u}_0\in D$ there is a unique maximal solution $\vec{u}(\cdot,\vec{u}_0)\in C([0,T_1), D)$ of the semilinear Cauchy problem 
\begin{align}
 \dot{\vec{u}} + \mathbb{A}\vec{u} = F(\vec{u}), \qquad \vec{u}(0)=\vec{u}_0\label{abstract}
\end{align}
with $0<T_1\leq \infty$. In addition for any $\epsilon \in (0,\frac14)$ holds
$$\vec{u}\in C((0,T_1), Y_{\epsilon + \frac12} )\cap C^1((0,T_1), Y_{\epsilon -\frac12}).$$
\end{prop}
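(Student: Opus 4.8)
The plan is to read \eqref{abstract} as a semilinear parabolic Cauchy problem living in the interpolation--extrapolation scale $\{(Y_\alpha,(\A_q)_\alpha)\}$ attached to $\A_q$, and then to quote the abstract well-posedness theory of Amann and Escher. No new analysis is really needed beyond the preparations already made; the work is in checking that the hypotheses of the cited theorems are met and in the bookkeeping of the scale.

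First I would fix $\beta\in(-\tfrac12,-\tfrac14)$ and regard \eqref{abstract} as an evolution equation over the ground space $Y_\beta$. Since $-\A_q$ generates an analytic semigroup on $Y_0$ and $\A_q$ is positive (by the spectral bound obtained in the preceding lemmas), the general theory of interpolation--extrapolation scales (cf.\ \cite{Amann88}, \cite{Amann}) shows that the $Y_\beta$-realisation $(\A_q)_\beta$ of $\mathbb{A}=(\A_q)_{-1/2}$ is again the negative generator of an analytic semigroup on $Y_\beta$, with domain $Y_{\beta+1}$. Moreover $D=Y_{1/2}=[Y_\beta,Y_{\beta+1}]_\mu$ with $\mu=\tfrac12-\beta$, and the decisive observation is that $\mu\in(\tfrac34,1)$ is \emph{strictly} less than $1$, which is precisely why $\beta>-\tfrac12$ must be imposed.

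Next I would verify the structural hypotheses on the right-hand side. By the construction of $F$, the identity \eqref{Rand}, and the fact that $R_{1/2}u\cdot\mathds{1}_B$ is constant on each cell (so that the second component of $\mathbb{A}\textbf{R}G(\vec{u})$ vanishes and the coupled structure is preserved), $F$ maps $D$ into $Y_\beta$; and by Lemma \ref{Lemma22} together with the discussion following it, $F\in C^1(D,Y_\beta)$ is Lipschitz continuous uniformly on bounded subsets of $D=[Y_\beta,Y_{\beta+1}]_\mu$. Being autonomous, $F$ is trivially H\"older continuous in time. Thus $(\mathbb{A},F)$ satisfies assumption (3.6) of \cite{Escher89}, equivalently the hypotheses of Theorem 12.1 together with Remark 12.2(b) in \cite{NLQ}, and applying that result to \eqref{abstract} yields, for every $\vec{u}_0\in D$, a unique maximal solution $\vec{u}(\cdot,\vec{u}_0)\in C([0,T_1),D)$ with $0<T_1\le\infty$. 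The extra regularity is then the parabolic smoothing effect of analytic semigroups: on $(0,T_1)$ the solution takes values in the domain $Y_{\beta+1}$ of the generator and is $C^1$ into the ground space $Y_\beta$, with quantitative estimates exactly those of \cite{Amann84} used already in the proof of Theorem \ref{allgemein}; writing $\epsilon=\beta+\tfrac12$ and letting $\beta$ range over $(-\tfrac12,-\tfrac14)$ produces $\vec{u}\in C((0,T_1),Y_{\epsilon+1/2})\cap C^1((0,T_1),Y_{\epsilon-1/2})$ for any $\epsilon\in(0,\tfrac14)$.

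The main point requiring care is the two-sided constraint on $\beta$: one has to keep the right-hand side defined on an interpolation space of order $<1$ above the chosen ground space $Y_\beta$ (forcing $\beta>-\tfrac12$ so that $\tfrac12-\beta<1$), while simultaneously keeping $|\beta|$ small enough that $F$, built from the trace of the square of a $W_2^1$-function, genuinely lands in $Y_\beta$ (forcing $\beta<-\tfrac14$). This balance is what pins $\beta$ to the interval $(-\tfrac12,-\tfrac14)$, and everything else is a direct application of the cited abstract theorems.
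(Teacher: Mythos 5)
Your proposal follows the same route as the paper, which disposes of the statement in one line by setting $\beta=-\tfrac12+\epsilon$ and invoking \cite{Amann88}, Sect.\ 12 together with Lemma \ref{Lemma22}; you have simply unfolded what that citation entails (the scale $\{(Y_\alpha,(\A_q)_\alpha)\}$, $D=Y_{1/2}$ sitting strictly between $Y_\beta$ and $Y_{\beta+1}$, $F\in C^1(D,Y_\beta)$ Lipschitz on bounded sets, parabolic smoothing), and the bookkeeping $\epsilon=\beta+\tfrac12$ matches the paper exactly. One small slip in the final paragraph: the constraint $\beta<-\tfrac14$ is obtained by taking $\beta$ \emph{negative enough} (so that $Y_\beta$, equivalently $W_2^{2\beta+1/2}(\Gamma_0)$, is weak enough to receive $\tr_0(u+\rho_0)^2$), not by keeping $|\beta|$ small; the interval $(-\tfrac12,-\tfrac14)$ you land on is nonetheless correct.
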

\begin{proof}
 Take $\beta = -\frac12 +\epsilon$. Then the assertion follow from \cite{Amann88}, Sect. 12 and the previous lemma. 
\end{proof}
\renewcommand{\phi}{\varphi}
By a \textit{weak solution} of (P') we mean a function $\vec{u} \in C^1([0,T_1), D)$ such that the initialcondition $\vec{u}(0)= \vec{u}_0-\rho$ is satisfied, and  
$$ -\int_0^T \langle \dot{\phi},\vec{u}\rangle_H + a(\phi,\vec{u}) \, dt = \int_0^T  \left(\langle \phi, (h,0)\rangle_H +\int_{\Gamma_0} \phi G(\vec{u})\right)\, dt + \langle \phi(0), \vec{u}_0-\rho_0\rangle_H $$
for all $0<T<T_1$, $\phi \in C([0,T], D)\cap C^1([0,T],H)$ with $\phi(T)=0$. So the above considerations show that
\begin{cor}\ \\
 For each $\vec{u}_0\in Y_\frac12$ there exists a unique maximal weak solution of (P').
\end{cor}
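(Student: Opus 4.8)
The plan is to identify the weak solution of (P$'$) with the unique maximal strong solution of the abstract problem \eqref{abstract} furnished by Proposition \ref{Prop23}. Fix $\vec u_0\in Y_{\frac12}=D$; recall that membership in $Y_{\frac12}$ already encodes the compatibility conditions built into \eqref{spaces1}. By Proposition \ref{Prop23} there is a unique maximal $\vec u\in C([0,T_1),D)$ with $\dot{\vec u}+\mathbb{A}\vec u=F(\vec u)$ and the initial value dictated by (P$'$), satisfying moreover $\vec u\in C((0,T_1),Y_{\epsilon+\frac12})\cap C^1((0,T_1),Y_{\epsilon-\frac12})$ for each $\epsilon\in(0,\frac14)$; recall that $F=(h,0)+\mathbb{A}\textbf{R}G$ maps $D$ into $Y_\beta$ with $\beta=-\frac12+\epsilon\in(-\frac12,-\frac14)$.

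\emph{From strong to weak.} Fix $0<T<T_1$ and $\phi\in C([0,T],D)\cap C^1([0,T],H)$ with $\phi(T)=0$. Pairing \eqref{abstract} with $\phi$ in $\langle\cdot,\cdot\rangle_H$, integrating over $(\delta,T)$, integrating by parts in the term $\int_\delta^T\langle\dot{\vec u},\phi\rangle_H\,dt$, replacing $\langle\mathbb{A}\vec u,\phi\rangle_H$ by $a(\vec u,\phi)=a(\phi,\vec u)$ via the Green-type identity established above and the symmetry of $a$, and finally letting $\delta\downarrow0$ (legitimate since $\vec u\in C([0,T_1),D)$ and all pairings are continuous) yields
\begin{align*}
 -\int_0^T\langle\dot\phi,\vec u\rangle_H\,dt+\int_0^T a(\phi,\vec u)\,dt=\int_0^T\langle\phi,F(\vec u)\rangle_H\,dt+\langle\phi(0),\vec u(0)\rangle_H.
\end{align*}
By the definition of $F$ and \eqref{Rand} (extended to the extrapolation scale as in \cite{Escher89}) one has $\langle\phi,\mathbb{A}\textbf{R}G(\vec u)\rangle_H=\int_{\Gamma_0}\phi\,G(\vec u)$ in the sense of traces, and $\vec u(0)$ is the initial value $\vec u_0-\rho_0$ of (P$'$). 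Hence the right-hand side above is precisely $\int_0^T\big(\langle\phi,(h,0)\rangle_H+\int_{\Gamma_0}\phi\,G(\vec u)\big)\,dt+\langle\phi(0),\vec u_0-\rho_0\rangle_H$, which is the defining weak identity; so $\vec u$ is a weak solution of (P$'$).

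\emph{Uniqueness and maximality.} Conversely, let $\vec u$ be any weak solution on $[0,T_1)$. Testing the weak identity against $\phi$ supported in $(0,T_1)$, using the density of such $\phi$ in the test space together with the continuity $t\mapsto G(\vec u(t))\in W_2^{2\beta+\frac12}(\Gamma_0)$ from Lemma \ref{Lemma22}, one runs the previous computation in reverse to obtain $\vec u\in C^1((0,T_1),Y_\beta)$ and $\dot{\vec u}+\mathbb{A}\vec u=F(\vec u)$ in $Y_\beta$ with the initial value of (P$'$); this is exactly the mild reformulation used in \cite{Escher89,Amann88}. Thus $\vec u$ solves \eqref{abstract}, and the uniqueness and maximality of the solution in Proposition \ref{Prop23} transfer verbatim.

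The delicate point is precisely this equivalence of the two formulations: one must check that the weak identity, in which the nonlinear Neumann datum on $\Gamma_0$ enters only through the boundary integral, amounts to the abstract semilinear equation in the extrapolated space $Y_\beta$. This rests on the generalized Green's formula, on the mapping properties of the retraction $\textbf{R}=\textbf{R}_{\frac12}$ encoded in \eqref{Rand}, and on the parabolic smoothing of the analytic semigroup generated by $-\mathbb{A}$, all of which have been prepared above and in \cite{Escher89,Amann88}.
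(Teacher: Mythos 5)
Your proof is correct and follows the same route as the paper: the paper's own proof is the one-line remark that the corollary "follows from the representation of $\mathbb{A}$ and (\ref{Rand})," and you simply spell out the details — pairing the abstract equation (\ref{abstract}) with test functions, using the Green-type identity $\langle\vec v,\mathbb{A}\vec u\rangle_H=a(\vec u,\vec v)$, invoking (\ref{Rand}) to convert $\mathbb{A}\textbf{R}G(\vec u)$ into the boundary integral, and running the argument in reverse (using that a weak solution lies in $C^1([0,T_1),D)$ by definition) to transfer uniqueness and maximality from Proposition \ref{Prop23}.
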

\begin{proof}
 This follows from the representation of $\mathbb{A}$ and (\ref{Rand}).
\end{proof}
\textbf{Remarks:}\\ 
(i) The construction in \cite{Escher89} allows to consider a more general $h\in C^1(D,Y_\beta)$. This means a full semilinear version of (P') can be treated.\\ 
(ii) Abstract results on evolution equations in interpolation-extrapolation scales ensure that the solutions satisfies
$$(u,U) \in C((0,T_1),Y_1)\cap C^1(0,T_1),Y_0).$$
 So the system (P') is satisfied pointwise in time.

\ \\
\
\hspace{1cm}
\tt
Institute for Applied Mathematics,
Leibniz University Hanover,
Welfengarten 1,
D 30167 Hanover,
Germany\\
 \textbf{email:} escher@ifam.uni-hannover.de\\
Institute for Applied Mathematics,
Leibniz University Hanover,
Welfengarten 1,
D 30167 Hanover,
Germany\\
\textbf{email:} treutler@ifam.uni-hannover.de

\end{document}